\documentclass[12pt,reqno]{amsart}
\usepackage{amsmath,amssymb,amsthm,mathtools}
\usepackage{enumerate}
\usepackage{geometry}
\usepackage[colorlinks=true,linkcolor=blue,citecolor=blue,urlcolor=blue]{hyperref}
\usepackage{enumitem}
\usepackage{cleveref}
\usepackage{booktabs}
\usepackage{array}
\newtheorem{theorem}{Theorem}[section]
\newtheorem{lemma}[theorem]{Lemma}
\newtheorem{proposition}[theorem]{Proposition}

\theoremstyle{definition}
\newtheorem{definition}[theorem]{Definition}
\newtheorem{remark}[theorem]{Remark}
\newtheorem{assumption}{Assumption}
\newtheorem{conjecture}[theorem]{Conjecture}
\newtheorem{openproblem}{Open problem}

\newcommand{\R}{\mathbb{R}}

\newcommand{\C}{\mathbb{C}}
\newcommand{\Lnorm}[1]{\|{#1}\|_{L^{2}(\Omega)}}

\newcommand{\dpt}[1]{\partial_{t}^{#1}}

\begin{document}

\title[Large-Time Upper Decay Estimates for Weakly Coupled Systems]{Large-Time Upper Decay Estimates for Weakly Coupled Time-Fractional Diffusion Systems}

\author[H.~Nguyen, T.~Nguyen, and I.~Witt]{Huong Nguyen, Thang Nguyen, and Ingo Witt}
\address{Faculty of Mathematics and Informatics, Hanoi University of Science and Technology}
\email{huong.nguyenthithu3@hust.edu.vn}

\address{Faculty of Mathematics and Informatics, Hanoi National University of Education}
\email{thangnn@hnue.edu.vn}

\address{Mathematical Institute
University of G\"ottingen, Germany}
\email{iwitt@mathematik.uni-goettingen.de}

\begin{abstract}
We prove conjectured-optimal large-time upper decay estimates for
solutions of weakly coupled systems of $m\ge2$ time-fractional
diffusion equations. Our result establishes the upper-bound assertion
in the conjecture of Li, Liu, and Wada~\cite{LLW26}. 
The sharpness of these upper exponents for $m\ge3$ remains open in
general; see Section~\ref{sec:discussion}.

Assume that the diffusion matrices are uniformly elliptic, the coupling
coefficients are bounded and may depend on time, and the matrix of $L^\infty$-norms of the coupling coefficients
satisfies a spectral-radius stability condition. Let $\alpha^*$ denote
the smallest fractional order among the components with nonzero initial
data. Then the $L^2(\Omega)^m$-norm of the solution is
$O(t^{-\alpha^*})$ as $t\to\infty$ if $\alpha^*<1$, whereas it is
$O(t^{-(1+\alpha_m)})$ if $\alpha^*=1$, where $\alpha_m$ is the
smallest fractional order occurring in the system. For $m=2$, 
the result yields the same upper decay exponents as
\cite[Theorem~1.1]{LLW26}; in the absence of self-coupling, the
matrix stability condition is weaker than the corresponding
small-coupling condition in that work. The case $m\ge3$ is new.

Our proof uses a large-time weighted Volterra framework. If
$\alpha^*<1$, the weight is a regularly varying power weight; if
$\alpha^*=1$, it is constructed by convolution with the resolvent
kernel associated with the smallest fractional order. These weights
yield the required large-time bounds for all resolvent kernels in the
system. In particular, the argument avoids the Laplace-transform and
residue-calculus methods used in \cite{LLW26}, which become difficult
to implement for systems with three or more components.
\end{abstract}
\date{September 2026}
\maketitle

\section{Introduction}
\label{sec:intro}

\subsection{Background and motivation}

The study of anomalous diffusion---non-Gaussian \linebreak transport in complex
media such as porous materials and biological tissues---has stimulated
extensive research on time-fractional partial differential equations.
For a scalar diffusion equation of fractional order $\alpha\in(0,1)$,
the solution typically decays polynomially at the rate $t^{-\alpha}$,
in sharp contrast with the exponential decay associated with classical
parabolic equations; see, for example, \cite{SY11,VZ15}. Coupled systems
exhibit richer dynamics: interactions among components with distinct
fractional orders can produce qualitative large-time behaviors that do
not arise in scalar problems.

Recently, Li, Liu, and Wada \cite{LLW26} investigated the two-component
system
\begin{equation}\label{eq:LLW-system}
  \begin{cases}
    \dpt{\alpha}(u-u_0)-\mathrm{div}(\mathbf{A}\nabla u)
      +c_{11}u+c_{12}v=0, \\
    \dpt{\beta}(v-v_0)-\mathrm{div}(\mathbf{B}\nabla v)
      +c_{21}u+c_{22}v=0,
  \end{cases}
  \quad\text{in }\Omega\times\R_+,
\end{equation}
with $1\!\ge\!\alpha\!>\!\beta\!>\!0$ and $v_0\!\equiv\!0$. They proved that the
solution decays at the rate $t^{-\alpha}$ when $\alpha<1$, whereas it
decays at the faster rate $t^{-(1+\beta)}$ when $\alpha=1$. The latter
case is particularly striking: coupling to a classical diffusion
component causes the fractional component to decay faster than any
scalar time-fractional diffusion equation. This superlinear decay
phenomenon has no analogue in scalar models.

Motivated by extensive numerical experiments for systems with $m$
components, Li, Liu, and Wada formulated the following conjecture.

\begin{conjecture}[Li--Liu--Wada \cite{LLW26}]\label{conj:LLW}
Consider system \eqref{eq:general-system} below with $m\ge2$
components and orders $1\ge\alpha_1\ge\dotsc\ge\alpha_m>0$, $\alpha_m<1$.
Let $\alpha^{*}\coloneq\min\{\alpha_i\colon u_i^0\not\equiv 0\}$.
Under suitable stability assumptions, there exists a $C>0$ such that
\[
  \sum_{i=1}^m\Lnorm{u_i(t)}\le
  \begin{cases}
    C\,t^{-\alpha^{*}}, & \alpha^{*}<1,\\
    C\,t^{-(1+\alpha_m)}, & \alpha^{*}=1,
  \end{cases}
  \qquad t\geq 1.
\]
\end{conjecture}

\subsection{Main difficulties and our approach}

The proof of \cite[Theorem~1.1]{LLW26} in the two-component case relies
on the Laplace transform and a detailed residue analysis on the complex
plane cut along the negative real axis. For $m\geq3$, the corresponding
characteristic relation takes the schematic form
\[
  (s^{\alpha_1}+\lambda_1)\cdots(s^{\alpha_m}+\lambda_m)
  = \text{coupling terms}.
\]
Because this relation involves several distinct fractional powers of
$s$, its roots cannot in general be analyzed explicitly. The associated
multivalued complex structure makes a residue-calculus argument
substantially more difficult, if not impractical.

We avoid this difficulty by working entirely in the \emph{time domain}.
Our argument consists of the following steps:
\begin{enumerate}[label=\textup{(\roman*)}]
  \item \textbf{Mild-solution representation.}
  Each component $u_i$ satisfies a Duhamel formula involving the
  resolvent operators $S_i(t,\mathcal{A}_i)$ and
  $R_i(t,\mathcal{A}_i)$. Their operator norms are bounded by scalar
  Mittag--Leffler kernels
  $s_{\alpha_i}(t,\lambda_i)$ and
  $r_{\alpha_i}(t,\lambda_i)$, respectively.

   \item \textbf{Large-time admissible weights.}
  We construct a positive weight $w\in\mathcal W(\mu)$, where
  \[
    \mu=
    \begin{cases}
      \alpha^*, & \alpha^*<1,\\
      1+\alpha_m, & \alpha^*=1.
    \end{cases}
  \]
  The weight controls the homogeneous terms for sufficiently large
  times and satisfies
  \[
    \int_{T_0}^t
    r_{\alpha_i}(t-s,\lambda_i)w(s)\,ds
    \leq K_iw(t),
    \qquad t\geq T_0.
  \]
  If $\alpha^*<1$, we take
  \[
    w(t)=(1+t)^{-\alpha^*}.
  \]
  If $\alpha^*=1$, we construct $w$ from the resolvent kernel
  corresponding to the smallest fractional order.

  \item \textbf{Closure under condition~\textup{(S)}.}
  The spectral-radius stability condition for the matrix of coupling
norms ensures that the matching constants can be chosen so that
  \[
     \rho(\mathsf K\mathsf C)<1,
  \] 
  where $\mathsf C
    \coloneq
    \left(
      \|c_{ij}\|_{L^\infty(\Omega\times\R_+)}
    \right)_{1\le i,j\le m}$,
    $\mathsf K
    \coloneq
    \operatorname{diag}(K_1,\ldots,K_m)$, and $\rho$ denotes the spectral radius.
  Splitting the Duhamel integral into an early-time part and a
  large-time part, a weighted Volterra argument then yields
  \[
    \|u_i(t)\|_{L^2(\Omega)}
    \leq Cw(t)
  \]
  for all sufficiently large $t$.
\end{enumerate}

\subsection{Statement of the main result}

Let $m\ge2$, let $\Omega\subset\R^d$ be a bounded domain with smooth
boundary and consider
\begin{equation}\label{eq:general-system}
  \begin{cases}
    \dpt{\alpha_i}(u_i-u_i^0)-\mathrm{div}(\mathbf{A}_i(x)\nabla u_i)
      +\displaystyle\sum_{j=1}^m c_{ij}(x,t)\,u_j=0
        & \text{in }\Omega\times\R_+,\\
    u_i=0 & \text{on }\partial\Omega\times\R_+,
  \end{cases}
  \quad i=1,\dotsc,m,
\end{equation}
where $1\ge\alpha_1\ge\cdots\ge\alpha_m>0$.
Here $\dpt{\alpha_i}$ denotes the Caputo fractional derivative of order
$\alpha_i$ when $\alpha_i<1$, and the usual time derivative when
$\alpha_i=1$.

\renewcommand{\theassumption}{(\Alph{assumption})} 
\begin{assumption}[Uniform ellipticity]\label{ass:elliptic}
  For each $i$, $\mathbf{A}_i\in C^1(\overline\Omega;\R^{d\times d}_{\mathrm{sym}})$
  and there exists a $\kappa_i>0$ such that
  $\mathbf{A}_i(x)\xi\cdot\xi\ge\kappa_i|\xi|^2$ for all $x\in\overline\Omega$,
  $\xi\in\R^d$.
\end{assumption}

\setcounter{assumption}{2}
\begin{assumption}[Bounded coupling]\label{ass:coupling}
  $c_{ij}\in L^\infty(\Omega\times\R_+)$ and $c_{ii}(x,t)\ge0$ for
  all $i$.
\end{assumption}

\begin{remark}[On the role of $c_{ii}\ge0$]\label{rem:cii}
  The condition $c_{ii}\geq0$ reflects the reaction--diffusion
  structure of \eqref{eq:general-system}: the self-interaction of each
  component is nonnegative and hence dissipative. It is the direct
  analogue of the assumptions $c_{11}\geq0$ and $c_{22}\geq0$ in
  \cite[Assumption~(C)]{LLW26}.
  This sign condition is not essential to the proof of
  Theorem~\ref{thm:main}. Indeed, the a priori estimate
  \eqref{eq:scalar-ineq} follows from the triangle inequality and the
  operator bound \eqref{eq:op-bounds}, which give
  \[
    \|R_i(t-s,\mathcal{A}_i)(-c_{ij}(s))u_j(s)\|_{L^2}
    \leq
    r_{\alpha_i}(t-s,\lambda_i)
    \|c_{ij}(\cdot,s)\|_{L^\infty}
    \|u_j(s)\|_{L^2},
  \]
  independently of the sign of $c_{ij}$, including the case $i=j$.
  Consequently, Theorem~\ref{thm:main} remains valid if the condition
  $c_{ii}\geq0$ is removed, provided that
  $\|c_{ii}\|_{L^\infty(\Omega\times\R_+)}$ is included in the coupling
  matrix $\mathsf C$ in assumption~\textup{(S)}. We impose
  $c_{ii}\geq0$ only to preserve the reaction-diffusion interpretation
  of the model.
\end{remark}

\setcounter{assumption}{18}
\begin{assumption}[Matrix stability]\label{ass:stability}
  Let $C_\Omega$ denote the Poincar\'e constant of $\Omega$, and set
  \[
    \lambda_i\coloneq\frac{\kappa_i}{C_\Omega^2},
    \qquad
    \gamma_{ij}
    \coloneq
    \|c_{ij}\|_{L^\infty(\Omega\times\R_+)},
    \qquad
    \mu_i\coloneq\sum_{j=1}^m\gamma_{ij}.
  \]
  Define
  \[
    \mathsf C\coloneq(\gamma_{ij})_{1\leq i,j\leq m},
    \qquad
    \mathsf\Lambda
    \coloneq
    \operatorname{diag}(\lambda_1,\ldots,\lambda_m).
  \]
  We assume that
  \begin{equation}\label{eq:stability}
    \rho(\mathsf\Lambda^{-1}\mathsf C)<1,
  \end{equation}
  where $\rho$ denotes the spectral radius.
\end{assumption}

\begin{remark}\label{rem_S}
  The matrix stability condition \eqref{eq:stability} is implied by
  the row-sum condition
  \[
    \lambda_i>
    \sum_{j=1}^m
    \|c_{ij}\|_{L^\infty(\Omega\times\R_+)},
    \qquad i=1,\ldots,m.
  \]
  Indeed,
  \[
    \rho(\mathsf\Lambda^{-1}\mathsf C)
    \leq
    \|\mathsf\Lambda^{-1}\mathsf C\|_{\ell^\infty\to\ell^\infty}
    =
    \max_{1\leq i\leq m}
    \frac{1}{\lambda_i}
    \sum_{j=1}^m
    \|c_{ij}\|_{L^\infty(\Omega\times\R_+)}
    <1.
  \]

  In the special two-component case
  \[
    c_{11}\equiv c_{22}\equiv0,
  \]
  let
  \[
    a\coloneq
    \|c_{12}\|_{L^\infty(\Omega\times\R_+)},
    \qquad
    b\coloneq
    \|c_{21}\|_{L^\infty(\Omega\times\R_+)}.
  \]
  Then
  \[
    \mathsf\Lambda^{-1}\mathsf C
    =
    \begin{pmatrix}
      0 & a/\lambda_1\\
      b/\lambda_2 & 0
    \end{pmatrix},
  \]
  so that \eqref{eq:stability} is equivalent to
  \[
    \rho(\mathsf\Lambda^{-1}\mathsf C)
    =
    \sqrt{\frac{ab}{\lambda_1\lambda_2}}
    <1,
  \]
  or equivalently,
  \[
    ab<\lambda_1\lambda_2.
  \]
  The condition imposed in \cite[assumption~(1.7)]{LLW26},
  \[
    \lambda_i>\max\{a,b\},
    \qquad i=1,2,
  \]
  implies this inequality.
\end{remark}

\begin{remark}
  The diagonal coefficients enter the matrix stability condition above
  because the present proof estimates the Duhamel term by the triangle
  inequality. If the coefficients $c_{ii}$ are time independent and
  nonnegative, they may instead be incorporated into the elliptic
  operators $\mathcal A_i$, so that only off-diagonal coefficients
  enter the stability matrix. Extending this refinement to
  time-dependent nonnegative diagonal coefficients would require
  resolvent estimates for nonautonomous fractional diffusion equations.
\end{remark}

\begin{theorem}[Main result]\label{thm:main}
    Suppose that assumptions \textup{(A)}, \textup{(C)}, and
  \textup{(S)} hold, and that
  \[
    1\geq\alpha_1\geq\cdots\geq\alpha_m>0,
    \qquad
    \alpha_m<1.
  \]
  Let $u_i^0\in L^2(\Omega)$ for $i=1,\dotsc,m$. If
  \[
    u_i^0\equiv0,
    \qquad i=1,\ldots,m,
  \]
  then the unique solution is identically zero. Suppose otherwise, and
  define
  \begin{equation}\label{eq:alpha-star}
    \alpha^*
    \coloneq
    \min\bigl\{\alpha_i: u_i^0\not\equiv0\bigr\}.
  \end{equation}
  Then there exists a constant $C>0$ such that the solution
  $(u_1,\dotsc,u_m)$ of \eqref{eq:general-system} satisfies
  \begin{equation}\label{eq:main-decay}
    \sum_{i=1}^m \Lnorm{u_i(t)}
    \leq
    \begin{cases}
      C\,t^{-\alpha^*}, & \alpha^*<1,\\
      C\,t^{-(1+\alpha_m)}, & \alpha^*=1,
    \end{cases}
    \qquad t\geq1.
  \end{equation}
\end{theorem}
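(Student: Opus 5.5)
The argument works in the time domain, following steps (i)--(iii) of the introduction. For the mild-solution representation, with $\mathcal A_i=-\mathrm{div}(\mathbf A_i\nabla\cdot)$ under Dirichlet conditions, write
\[
u_i(t)=S_i(t,\mathcal A_i)u_i^0-\sum_j\int_0^t R_i(t-s,\mathcal A_i)\,c_{ij}(s)u_j(s)\,ds .
\]
The first eigenvalue of $\mathcal A_i$ is at least $\lambda_i$ by Assumption (A) and Poincar\'e. Hence the spectral calculus gives the operator bounds $\|S_i(t)\|\le s_{\alpha_i}(t,\lambda_i)=E_{\alpha_i}(-\lambda_i t^{\alpha_i})$ and $\|R_i(t)\|\le r_{\alpha_i}(t,\lambda_i)=t^{\alpha_i-1}E_{\alpha_i,\alpha_i}(-\lambda_it^{\alpha_i})$, with the obvious exponential versions when $\alpha_i=1$. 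Setting $y_i(t)=\|u_i(t)\|_{L^2}$, these bounds yield the scalar Volterra inequality
\[
y_i(t)\le s_{\alpha_i}(t,\lambda_i)\|u_i^0\|+\sum_j\gamma_{ij}\int_0^t r_{\alpha_i}(t-s,\lambda_i)\,y_j(s)\,ds .
\]
If all data vanish, the homogeneous inequality together with the local boundedness of $y$ and a Gronwall-type iteration for weakly singular kernels gives $y\equiv0$, so uniqueness yields the zero solution. Standard well-posedness also gives $\sup_{t\ge0}y_i<\infty$, using $\int_0^\infty r_{\alpha_i}=1/\lambda_i$ and (S).

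The next step is to construct the weight $w$ with three properties: $s_{\alpha_i}(t,\lambda_i)\lesssim w(t)$ whenever $u_i^0\not\equiv0$, the matching bound $\int_{T_0}^t r_{\alpha_i}(t-s,\lambda_i)w(s)\,ds\le K_iw(t)$, and $K_i$ arbitrarily close to $1/\lambda_i$ once $T_0$ is large.

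In the case $\alpha^*<1$, take $w=(1+t)^{-\alpha^*}$. Split the convolution into $s<t/2$ and $s>t/2$. On $s>t/2$, $w(s)\le w(t/2)$ is comparable to $w(t)$ up to a factor close to $1$ for large $t$ if the split is taken at $(1-\epsilon)t$ instead, giving a contribution $\le(1+\epsilon')w(t)/\lambda_i$. On $s<(1-\epsilon)t$, $r_{\alpha_i}(t-s)\lesssim (t-s)^{-1-\alpha_i}$; for $\alpha_i<1$ this gives $O(t^{-1-\alpha_i}\cdot t^{1-\alpha^*})=o(w)$, and for $\alpha_i=1$ the decay is exponential. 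Components with $\alpha_i<\alpha^*$ have zero data, so their homogeneous term vanishes.

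In the case $\alpha^*=1$, only components with $\alpha_i=1$ have data, and their $s_1$ term decays exponentially. The target is $t^{-1-\alpha_m}$, the tail of $r_{\alpha_m}$. Here I would take $w=r_{\alpha_m}(\cdot,\lambda')*e^{-\delta\cdot}$ for suitable $\lambda'$, or simply $w\asymp(1+t)^{-1-\alpha_m}$. The matching estimate repeats the previous splitting. The subtle region is now $s$ small, where $r_{\alpha_i}(t-s)w(s)$ contributes $r_{\alpha_i}(t)\int w$, of order $t^{-1-\alpha_i}\ge t^{-1-\alpha_m}$. This term is $O(w)$ but not small; it can be absorbed because the estimate is only needed in a relative sense with constants $K_i$.

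I expect the main obstacle to be securing $\rho(\mathsf K\mathsf C)<1$ in the $\alpha^*=1$ case. A fractional component gives $K_i\approx 1/\lambda_i+\int_0^\infty w\cdot(\text{tail const})/w$, which is not automatically close to $1/\lambda_i$. The remedy I would try first is to use the tail precisely: $w(s)$ is integrable, so the early part contributes roughly $r_{\alpha_i}(t)\int_{T_0}^\infty w$, which is $\le\epsilon w(t)$ once $T_0$ is large. This is exactly why the convolution starts at $T_0$. The early-time part $\int_0^{T_0}$ is bounded by $\sup y\cdot\gamma\int_0^{T_0}r_{\alpha_i}(t-s)\,ds\lesssim r_{\alpha_i}(t-T_0)$, which is $O(w)$.

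To close, define $M_i(T)=\sup_{T_0\le t\le T}y_i(t)/w(t)$, which is finite. The inequality gives $M\le C\mathbf 1+\mathsf K\mathsf C M$ componentwise. Since $\mathsf K\mathsf C$ is nonnegative with spectral radius less than $1$, $(I-\mathsf K\mathsf C)^{-1}\ge0$, so $M\le(I-\mathsf K\mathsf C)^{-1}C\mathbf 1$ uniformly in $T$. This proves \eqref{eq:main-decay} for $t\ge T_0$. On $[1,T_0]$ the bound follows from boundedness of $y$.
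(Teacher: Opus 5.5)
Your proposal is correct. Its architecture is the paper's: the Duhamel formula, the scalar Volterra inequality, moving the $[0,T_0]$ part of the Duhamel integral into an $O(w)$ forcing term, and closing with a nonnegative matrix of spectral radius less than one. Your closure, multiplying $M(T)\le A+\mathsf K\mathsf C\,M(T)$ by $(I-\mathsf K\mathsf C)^{-1}\ge0$, is equivalent to the paper's weighted norm built from $p=(I-q^{-1}\mathsf K\mathsf C)^{-1}\mathbf 1$. The case $\alpha^*<1$ is also the same, except that you prove the convolution asymptotics by hand instead of citing the regular-variation theorem.

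The genuine difference is the case $\alpha^*=1$.
\begin{itemize}
\item The paper takes $w=r_{\alpha_m}(\cdot,\eta)*(1+\cdot)^{-(1+\alpha_m)}$. It uses the resolvent identity (Lemma~\ref{lem:well-interacted}) to get the exact bound $r_{\alpha_m}(\cdot,\lambda_i)*w\le w/(\lambda_i-\eta)$ for $i\in I_m$ on all of $(0,\infty)$, and then sends $\eta\to0$. This requires the Laplace transform of $r_\alpha$, plus the two-tail convolution theorem to see $w\in\mathcal W(1+\alpha_m)$.
\item You keep $w=(1+t)^{-(1+\alpha_m)}$. The off-diagonal contribution is $\approx r_{\alpha_i}(t,\lambda_i)\int w$; for $i\in I_m$ it has the same order as $w(t)$, and over $[0,t]$ it adds the non-vanishing amount $1/(\lambda_i^2\Gamma(1-\alpha_m))$ to the matching constant. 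Once the integral starts at $T_0$, it becomes $\lesssim_\theta t^{-1-\alpha_i}\int_{T_0}^\infty w\lesssim_\theta T_0^{-\alpha_m}w(t)$.
\item With the split at $\theta t$, this gives $K_i\le\theta^{-(1+\alpha_m)}/\lambda_i+C_\theta T_0^{-\alpha_m}$ uniformly in $t\ge T_0$. So $K_i$ can be taken arbitrarily close to $1/\lambda_i$ for every $i$.
\end{itemize}
This is valid and more elementary. A single power-weight argument covers both cases using only $\int_0^\infty r_{\alpha_i}=1/\lambda_i$ and $r_{\alpha_i}(t)\le Ct^{-1-\alpha_i}$, and it shows the resolvent-kernel weight is not actually needed. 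What the paper's construction buys is an explicit, non-asymptotic matching inequality for the minimal-order components that does not depend on the truncation at $T_0$.

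A few points to fix in a write-up:
\begin{itemize}
\item The inequality $t^{-1-\alpha_i}\ge t^{-1-\alpha_m}$ is reversed. It is $\le$ for $t\ge1$, and that is exactly why the term is $O(w)$.
\item ``$w\asymp(1+t)^{-1-\alpha_m}$'' must mean the exact power, or at least a regularly varying weight. Two-sided comparability $a t^{-\mu}\le w\le b t^{-\mu}$ alone only gives a near-diagonal constant of size $b/(a\lambda_i)$, not one close to $1/\lambda_i$.
\item The bound $\int_0^{T_0}r_{\alpha_i}(t-s)\,ds\lesssim r_{\alpha_i}(t-T_0)$ is useful only for $t\ge2T_0$. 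The range $[T_0,2T_0]$ must be covered separately, using $\int_0^\infty r_{\alpha_i}=1/\lambda_i$.
\end{itemize}
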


The exponents in Theorem~\ref{thm:main} coincide with the rates
conjectured in \cite{LLW26}. In the two-component setting,
\cite{LLW26} shows through explicit examples that these exponents
cannot in general be improved. For $m\ge3$, the corresponding general
sharpness problem remains open.

\begin{remark}[Eventual matrix stability]
  The global matrix stability condition \eqref{eq:stability} may be
  weakened to an eventual condition. For $T\ge0$, define
  \[
    \mathsf C(T)
    \coloneq
    \left(
      \operatorname*{ess\,sup}_{t\ge T}
      \|c_{ij}(\cdot,t)\|_{L^\infty(\Omega)}
    \right)_{1\le i,j\le m}.
  \]
  Suppose that
  \[
    \rho\bigl(\mathsf\Lambda^{-1}\mathsf C(T_*)\bigr)<1
  \]
  for some $T_*>0$. Then the conclusion of
  Theorem~\ref{thm:main} remains valid.

  Indeed, choose the large-time threshold $T_0$ so that
  $T_0\ge T_*$. In the estimate of the Duhamel term over
  $[T_0,t]$, replace $\gamma_{ij}$ by
  \[
    \gamma_{ij}(T_*)
    \coloneq
    \operatorname*{ess\,sup}_{s\ge T_*}
    \|c_{ij}(\cdot,s)\|_{L^\infty(\Omega)}.
  \]
  The contribution from $[0,T_0]$ is absorbed into the early-time term,
  exactly as in the proof of Proposition~\ref{prop:large-time-closure}.
  The matrix closure argument then applies with $\mathsf C(T_*)$ in
  place of $\mathsf C$.  

  In particular, the eventual stability condition holds whenever
  \[
    \|c_{ij}(\cdot,t)\|_{L^\infty(\Omega)}\to0
    \qquad\text{as }t\to\infty
  \]
  for all $i,j$.
\end{remark}

\subsection{Organization of the paper}
Section~\ref{sec:prelim} collects properties of Mittag--Leffler
functions, resolvent kernels, and their convolutions.
Section~\ref{sec:wellposed} establishes well-posedness and derives the
mild-solution representation.
Section~\ref{sec:framework} introduces the large-time admissible-weight
framework and proves the weighted closure principle.
Section~\ref{sec:weights} constructs the optimal weights in the two
cases of Theorem~\ref{thm:main}.
Section~\ref{sec:proof} proves Theorem~\ref{thm:main}.
Finally, Section~\ref{sec:discussion} discusses sharpness and several
open problems.

\section{Preliminaries}
\label{sec:prelim}

\subsection{Mittag--Leffler functions}

Recall the two-parameter Mittag--Leffler function
\[
  E_{\alpha,\beta}(z)
  \coloneq
  \sum_{k=0}^{\infty}\frac{z^k}{\Gamma(\alpha k+\beta)},
  \qquad z\in\C,\quad \alpha>0,\quad \beta\in\R.
\]
We write $E_\alpha\coloneq E_{\alpha,1}$. For
$\alpha\in(0,1)$, $\lambda>0$, and $t>0$, define
\begin{align}
  s_\alpha(t,\lambda)
  &\coloneq E_\alpha(-\lambda t^\alpha),
  \label{eq:sker}\\
  r_\alpha(t,\lambda)
  &\coloneq t^{\alpha-1}E_{\alpha,\alpha}(-\lambda t^\alpha).
  \label{eq:rker}
\end{align}
For $\alpha=1$, we set
\[
  s_1(t,\lambda)=r_1(t,\lambda)\coloneq e^{-\lambda t}.
\]

\begin{lemma}[Properties of Mittag--Leffler functions]
\label{lem:ML}
Let $\alpha\in(0,1)$ and $\lambda>0$. Then the following properties
hold.
\begin{enumerate}[label=\textup{(\roman*)}]
  \item For every $t>0$,
  \[
    0<E_\alpha(-\lambda t^\alpha)\leq1
  \]
  and
  \[
    0<r_\alpha(t,\lambda)
    \leq\frac{t^{\alpha-1}}{\Gamma(\alpha)}.
  \]

  \item As $t\to+\infty$,
  \begin{align}
    s_\alpha(t,\lambda)
    &\sim
    \frac{1}{\lambda\Gamma(1-\alpha)}\,t^{-\alpha},
    \label{eq:sasy}\\
    r_\alpha(t,\lambda)
    &\sim
    \frac{\alpha}{\lambda^2\Gamma(1-\alpha)}\,t^{-\alpha-1}.
    \label{eq:rasy}
  \end{align}

  \item There exists a constant $C=C(\alpha,\lambda)>0$ such that
  \[
    r_\alpha(t,\lambda)
    \leq C t^{-\alpha-1},
    \qquad t\geq1.
  \]

  \item There exists a constant $C_D=C_D(\alpha,\lambda)>0$ such that
  \[
    r_\alpha(t,\lambda)
    \leq C_D r_\alpha(2t,\lambda),
    \qquad t\geq1.
  \]
\end{enumerate}
\end{lemma}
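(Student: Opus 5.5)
The plan is to derive all four items from the complete monotonicity of $x\mapsto E_\alpha(-x)$ on $[0,\infty)$ (Pollard's theorem) and from the standard asymptotic expansion of Mittag--Leffler functions on the negative real axis.

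For (i), Pollard's theorem gives $E_\alpha(-x)=\int_0^\infty e^{-xs}\,dF_\alpha(s)$ for a probability measure $F_\alpha$ on $(0,\infty)$. Hence $0<E_\alpha(-x)\le E_\alpha(0)=1$ for $x\ge0$. For $r_\alpha$, I will use the identity
\[
  \frac{d}{dt}E_\alpha(-\lambda t^\alpha)=-\lambda t^{\alpha-1}E_{\alpha,\alpha}(-\lambda t^\alpha),
\]
which follows by termwise differentiation. Consequently $r_\alpha(t,\lambda)=-\lambda^{-1}\partial_t s_\alpha(t,\lambda)$. Complete monotonicity of $x\mapsto E_\alpha(-x)$ together with $x=\lambda t^\alpha$ gives $-\frac{d}{dx}E_\alpha(-x)>0$, so $E_{\alpha,\alpha}(-x)=\alpha\bigl(-\frac{d}{dx}E_\alpha(-x)\bigr)>0$. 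I will also use the standard fact that $x\mapsto E_{\alpha,\alpha}(-x)$ is itself completely monotone for $0<\alpha\le1$, again by Pollard/Schneider type results. It is therefore decreasing, so $E_{\alpha,\alpha}(-x)\le E_{\alpha,\alpha}(0)=1/\Gamma(\alpha)$, which is the stated bound.

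For (ii), I will invoke the classical asymptotic expansion (Podlubny, Thm.~1.4, or Gorenflo--Kilbas--Mainardi--Rogosin)
\[
  E_{\alpha,\beta}(-x)=\sum_{k=1}^{N}\frac{(-1)^{k+1}x^{-k}}{\Gamma(\beta-\alpha k)}+O(x^{-N-1}),\qquad x\to+\infty.
\]
For $\beta=1$ the leading term is $x^{-1}/\Gamma(1-\alpha)$, which gives \eqref{eq:sasy} with $x=\lambda t^\alpha$. For $\beta=\alpha$ the $k=1$ term vanishes because $1/\Gamma(0)=0$. The $k=2$ term is $-x^{-2}/\Gamma(-\alpha)=\alpha x^{-2}/\Gamma(1-\alpha)$, using $\Gamma(1-\alpha)=-\alpha\Gamma(-\alpha)$. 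Multiplying by $t^{\alpha-1}$ yields $\alpha\lambda^{-2}\Gamma(1-\alpha)^{-1}t^{-\alpha-1}$, which is \eqref{eq:rasy}.

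Items (iii) and (iv) are consequences of (i) and (ii). The function $t\mapsto t^{\alpha+1}r_\alpha(t,\lambda)$ is continuous and positive on $[1,\infty)$. By (ii) it converges to the positive limit $L=\alpha/(\lambda^2\Gamma(1-\alpha))$, so it is bounded above, which gives (iii). The same argument shows it is bounded below by some $c>0$ on $[1,\infty)$. Then
\[
  r_\alpha(t)\le C t^{-\alpha-1}=C2^{\alpha+1}(2t)^{-\alpha-1}\le (C2^{\alpha+1}/c)\,r_\alpha(2t),
\]
which gives (iv) with $C_D=C2^{\alpha+1}/c$.

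The main obstacle is justifying the positivity and monotonicity of $E_{\alpha,\alpha}(-x)$ used in (i). If complete monotonicity of $E_{\alpha,\alpha}(-x)$ cannot simply be cited, I would obtain it by differentiating Pollard's representation. This gives $E_{\alpha,\alpha}(-x)=\alpha\int_0^\infty s\,e^{-xs}\,dF_\alpha(s)$, which is manifestly positive and completely monotone, and its value at $x=0$ is $1/\Gamma(\alpha)$ from the series. This also requires the first moment of $F_\alpha$ to be finite, which follows from $-\frac{d}{dx}E_\alpha(-x)\big|_{x=0}=1/\Gamma(1+\alpha)$.
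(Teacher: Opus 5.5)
Your proposal is correct and follows essentially the same route as the paper: the paper cites the standard Mittag--Leffler literature for all four items, and it spells out exactly your $k=2$ computation, $-1/\Gamma(-\alpha)=\alpha/\Gamma(1-\alpha)$, for the asymptotics of $r_\alpha$. Your version is more complete than the paper's, because you justify (i) via Pollard's representation and derive (iii) and (iv) from the positivity and positive limit of $t^{\alpha+1}r_\alpha(t,\lambda)$ on $[1,\infty)$, steps the paper leaves implicit.
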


\begin{proof}
The assertions are standard; see, for example,
\cite[Chapter~3]{Pod99} and
\cite[Theorems~3.4.1 and~3.4.2]{GKMR14}. The asymptotic formula for
$r_\alpha$ follows from the large-argument expansion of
$E_{\alpha,\alpha}$:
\[
  t^{\alpha-1}E_{\alpha,\alpha}(-\lambda t^\alpha)
  \sim
  -\frac{1}{\lambda^2\Gamma(-\alpha)}\,t^{-\alpha-1}
  =
  \frac{\alpha}{\lambda^2\Gamma(1-\alpha)}\,t^{-\alpha-1}. \qedhere
\]
\end{proof}

\begin{lemma}[Laplace transform]\label{lem:LT}
For $\alpha\in(0,1]$, $\lambda>0$, and $s>0$,
\[
  \mathcal{L}[r_\alpha(\cdot,\lambda)](s)
  =
  \frac{1}{s^\alpha+\lambda}.
\]
In particular,
$ \displaystyle \int_0^\infty r_\alpha(t,\lambda)\,dt
  =  \frac1\lambda$.
\end{lemma}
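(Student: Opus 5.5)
The plan is to compute the Laplace transform of $r_\alpha$ directly from its power series, first for large $s$ and then for all $s>0$ by analytic continuation, and finally to read off the integral as the value at $s=0$.

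\textbf{Case $\alpha=1$.} Here $r_1(t,\lambda)=e^{-\lambda t}$, so $\int_0^\infty e^{-st}e^{-\lambda t}\,dt=(s+\lambda)^{-1}$, and there is nothing more to do.

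\textbf{Termwise computation for large $s$.} For $\alpha\in(0,1)$ we expand
\[
  r_\alpha(t,\lambda)=\sum_{k\ge0}\frac{(-\lambda)^k t^{\alpha k+\alpha-1}}{\Gamma(\alpha k+\alpha)}
\]
and use
\[
  \mathcal L[t^{\beta-1}](s)=\frac{\Gamma(\beta)}{s^\beta}, \qquad \beta>0.
\]
To interchange sum and integral, we bound the absolute series: it is $t^{\alpha-1}E_{\alpha,\alpha}(\lambda t^\alpha)$. Its termwise Laplace transform is $\sum_k \lambda^k s^{-\alpha k-\alpha}$, which is finite when $s^\alpha>\lambda$. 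Tonelli's theorem then justifies the interchange for $s>\lambda^{1/\alpha}$, and we obtain the geometric series
\[
  \sum_{k\ge0}(-\lambda)^k s^{-\alpha(k+1)}=\frac{1}{s^\alpha+\lambda}.
\]

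\textbf{Extension to all $s>0$.} By Lemma~\ref{lem:ML}(i), $0<r_\alpha\le t^{\alpha-1}/\Gamma(\alpha)$. By Lemma~\ref{lem:ML}(iii), $r_\alpha(t,\lambda)\le Ct^{-\alpha-1}$ for $t\ge1$. Together these show $r_\alpha\in L^1(0,\infty)$, so its Laplace transform is analytic in $\operatorname{Re}s>0$ and continuous up to $s=0$. The function $(s^\alpha+\lambda)^{-1}$ is also analytic on $\operatorname{Re}s>0$ with the principal branch, since $s^\alpha+\lambda\ne0$ there. Both functions agree on $(\lambda^{1/\alpha},\infty)$, so the identity theorem gives equality for all $s>0$.

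\textbf{The integral.} Because $r_\alpha\ge0$ is integrable, monotone convergence as $s\downarrow0$ gives
\[
  \int_0^\infty r_\alpha\,dt=\lim_{s\downarrow0}(s^\alpha+\lambda)^{-1}=\frac1\lambda.
\]

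The only delicate point is justifying the interchange and the continuation. The dominating series grows like $e^{\lambda^{1/\alpha}t}$, which is exactly why we restrict first to large $s$ and then continue analytically.
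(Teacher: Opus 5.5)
The paper gives no proof of this lemma. It states it as a standard Mittag--Leffler identity, implicitly resting on the same references as Lemma~\ref{lem:ML}. Your argument is correct and is essentially the classical derivation, so it supplies the omitted justification rather than competing with a different proof.

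All the steps check. The majorant $\sum_k\lambda^k s^{-\alpha(k+1)}$ converges exactly for $s>\lambda^{1/\alpha}$, which licenses Tonelli/Fubini there. For $\operatorname{Re}s>0$ the principal branch gives $|\arg s^\alpha|<\alpha\pi/2<\pi/2$, hence $\operatorname{Re}(s^\alpha+\lambda)>\lambda$. So the right-hand side is holomorphic on the connected half-plane, and the identity theorem applies.

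Your continuation step is the part worth emphasizing. Standard tables usually record this transform only for $\operatorname{Re}s>\lambda^{1/\alpha}$. The paper, however, needs the limit $s\downarrow0$, i.e.\ $\int_0^\infty r_\alpha\,dt=1/\lambda$, to get matching constants close to $1/\lambda_i$ in Lemmas~\ref{lem:weight-alpha-star-less-one} and~\ref{lem:matching-constants-matrix}.

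Two small economies are available. First, the bound in Lemma~\ref{lem:ML}(i) alone gives $\int_0^\infty e^{-\sigma t}|r_\alpha(t,\lambda)|\,dt<\infty$ for every $\sigma>0$, which is all that holomorphy on $\operatorname{Re}s>0$ requires. Second, monotone convergence as $s\downarrow0$ needs only $r_\alpha\ge0$, not integrability. It therefore yields the integrability of $r_\alpha$ and the value $1/\lambda$ at the same time, so Lemma~\ref{lem:ML}(iii) is not needed.
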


\subsection{Resolvent identity}

The following elementary identity is central to our argument.

\begin{lemma}[Resolvent identity]\label{lem:resolvent-id}
Let $\alpha\in(0,1]$ and $\lambda>\eta>0$. Then, for every $t>0$,
\begin{equation}\label{eq:resolvent-id}
  \int_0^t r_\alpha(t-s,\lambda)r_\alpha(s,\eta)\,ds
  =
  \frac{r_\alpha(t,\eta)-r_\alpha(t,\lambda)}{\lambda-\eta}
  \leq
  \frac{r_\alpha(t,\eta)}{\lambda-\eta}.
\end{equation}
\end{lemma}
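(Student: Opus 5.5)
The plan is to prove the identity through the Laplace transform, using Lemma~\ref{lem:LT} together with uniqueness of Laplace transforms for continuous functions; the inequality will then follow from positivity. Both $r_\alpha(\cdot,\lambda)$ and $r_\alpha(\cdot,\eta)$ are nonnegative and integrable on $(0,\infty)$, with integrable singularity $t^{\alpha-1}$ at $0$ by Lemma~\ref{lem:ML}(i). Hence their convolution
\[
h(t)\coloneq\int_0^t r_\alpha(t-s,\lambda)r_\alpha(s,\eta)\,ds
\]
is well defined and integrable. Moreover, $h$ is continuous for $t>0$, since it is the convolution of an $L^1$ function with a function that is locally $L^1$ near $0$ and bounded away from $0$. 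By the convolution theorem and Lemma~\ref{lem:LT}, for $s>0$,
\[
\mathcal L[h](s)=\frac{1}{(s^\alpha+\lambda)(s^\alpha+\eta)}
=\frac{1}{\lambda-\eta}\Bigl(\frac{1}{s^\alpha+\eta}-\frac{1}{s^\alpha+\lambda}\Bigr).
\]
This is exactly the Laplace transform of $\bigl(r_\alpha(\cdot,\eta)-r_\alpha(\cdot,\lambda)\bigr)/(\lambda-\eta)$.

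Both sides are continuous on $(0,\infty)$ and have the same Laplace transform on $s>0$. By Lerch's theorem they therefore agree for every $t>0$, which gives the equality in \eqref{eq:resolvent-id}. For $\alpha=1$, everything can be checked directly instead:
\[
\int_0^t e^{-\lambda(t-s)}e^{-\eta s}\,ds=\frac{e^{-\eta t}-e^{-\lambda t}}{\lambda-\eta}.
\]
The inequality then follows because $r_\alpha(t,\lambda)>0$ by Lemma~\ref{lem:ML}(i) (for $\alpha=1$ it is an exponential) and $\lambda-\eta>0$.

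The only delicate point is continuity of $h$ at every $t>0$, which is needed for pointwise rather than a.e.\ equality. I would handle it by splitting the integral at $t/2$: on each half, one factor is bounded and continuous, and the other is integrable near its singularity. Continuity in $t$ then follows by dominated convergence. Alternatively, one could avoid transforms altogether by expanding both Mittag--Leffler kernels in their power series, integrating term by term with the Beta function, and resumming. The Laplace approach, however, is shorter and relies only on results already stated.
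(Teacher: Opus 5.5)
Your proposal is correct and follows the same route as the paper: you compute the Laplace transform of the convolution via Lemma~\ref{lem:LT}, split $1/((s^\alpha+\lambda)(s^\alpha+\eta))$ by partial fractions, invert, and deduce the inequality from the positivity of $r_\alpha(t,\lambda)$. Your additional steps (continuity of the convolution by splitting at $t/2$, Lerch's theorem to get pointwise rather than a.e.\ equality, and the direct check at $\alpha=1$) make explicit what the paper's phrase ``by Laplace inversion'' leaves implicit.
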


\begin{proof}
By Lemma~\ref{lem:LT},
\[
  \mathcal{L}[r_\alpha(\cdot,\lambda)
  *r_\alpha(\cdot,\eta)](s)
  =
  \frac{1}{(s^\alpha+\lambda)(s^\alpha+\eta)}.
\]
Since
\[
  \frac{1}{(s^\alpha+\lambda)(s^\alpha+\eta)}
  =
  \frac{1}{\lambda-\eta}
  \left(
    \frac{1}{s^\alpha+\eta}
    -
    \frac{1}{s^\alpha+\lambda}
  \right),
\]
the identity follows by Laplace inversion. The inequality follows from
the positivity of $r_\alpha(t,\lambda)$.
\end{proof}

\begin{lemma}[Matching]\label{lem:well-interacted}
Let $\alpha\in(0,1]$, $\lambda>\eta>0$, and let
$h\in L^1_{\mathrm{loc}}(\R_+)$ satisfy $h\geq0$. Define
\[
  w_\eta(t)
  \coloneq
  \int_0^t r_\alpha(t-\tau,\eta)h(\tau)\,d\tau.
\]
Then, for every $t>0$,
\begin{equation}\label{eq:well-interacted}
  \int_0^t r_\alpha(t-s,\lambda)w_\eta(s)\,ds
  \leq
  \frac{w_\eta(t)}{\lambda-\eta}.
\end{equation}
\end{lemma}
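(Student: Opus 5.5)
The plan is to write the left-hand side of \eqref{eq:well-interacted} as a triple convolution and reduce it to the resolvent identity. Since $w_\eta = r_\alpha(\cdot,\eta)*h$, associativity and commutativity of convolution on $\R_+$ give
\[
  \int_0^t r_\alpha(t-s,\lambda)w_\eta(s)\,ds
  =
  \bigl(r_\alpha(\cdot,\lambda)*r_\alpha(\cdot,\eta)*h\bigr)(t)
  =
  \int_0^t \bigl(r_\alpha(\cdot,\lambda)*r_\alpha(\cdot,\eta)\bigr)(t-\tau)\,h(\tau)\,d\tau .
\]
This rearrangement requires exchanging the order of integration. Tonelli's theorem justifies it, because all integrands are nonnegative: $r_\alpha>0$ by Lemma~\ref{lem:ML}(i) (trivially so for $\alpha=1$), and $h\ge0$.

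Next we apply Lemma~\ref{lem:resolvent-id} to the inner kernel. For every $\sigma=t-\tau>0$ it gives
\[
  \bigl(r_\alpha(\cdot,\lambda)*r_\alpha(\cdot,\eta)\bigr)(\sigma)
  \le
  \frac{r_\alpha(\sigma,\eta)}{\lambda-\eta}.
\]
We multiply this bound by $h(\tau)\ge0$ and integrate over $\tau\in(0,t)$. The result is exactly $w_\eta(t)/(\lambda-\eta)$, which proves \eqref{eq:well-interacted}.

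The only delicate point is that the quantities involved must be finite and measurable. The kernels are locally integrable, since $r_\alpha(\sigma,\cdot)\le \sigma^{\alpha-1}/\Gamma(\alpha)$, and $h\in L^1_{\mathrm{loc}}$. Hence $w_\eta$ is finite almost everywhere and locally integrable by Young's inequality on $[0,t]$. Because everything is nonnegative, the inequality holds even if some value is $+\infty$, so no genuine obstacle arises.

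If $t$ is a point where $w_\eta(t)=+\infty$, the inequality holds trivially. If one wants the bound for every $t>0$ with a finite right-hand side, the Tonelli argument covers that case directly as well.
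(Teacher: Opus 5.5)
Your proposal is correct and follows the paper's own proof. Both use Tonelli's theorem to rewrite the left-hand side as $\int_0^t \bigl(r_\alpha(\cdot,\lambda)*r_\alpha(\cdot,\eta)\bigr)(t-\tau)\,h(\tau)\,d\tau$ and then apply Lemma~\ref{lem:resolvent-id}; the paper writes out the exact identity and drops the term $-r_\alpha(t-\tau,\lambda)$, which is equivalent to your direct use of the inequality.
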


\begin{proof}
Since all integrands are nonnegative, Tonelli's theorem and
Lemma~\ref{lem:resolvent-id} yield
\begin{align*}
  \int_0^t r_\alpha(t-s,\lambda)w_\eta(s)\,ds
  &=
  \int_0^t h(\tau)
  \int_\tau^t
  r_\alpha(t-s,\lambda)r_\alpha(s-\tau,\eta)
  \,ds\,d\tau\\
  &=
  \frac{1}{\lambda-\eta}
  \int_0^t h(\tau)
  \bigl[
    r_\alpha(t-\tau,\eta)
    -r_\alpha(t-\tau,\lambda)
  \bigr]\,d\tau\\
  &\leq
  \frac{1}{\lambda-\eta}
  \int_0^t h(\tau)r_\alpha(t-\tau,\eta)\,d\tau\\
  &=
  \frac{w_\eta(t)}{\lambda-\eta}. \qedhere
\end{align*}
\end{proof}

\begin{remark}
Lemma~\ref{lem:well-interacted} shows that weights of the form
\[
  w_\eta=r_\alpha(\cdot,\eta)*h
\]
are stable under convolution with $r_\alpha(\cdot,\lambda)$ whenever
$\lambda>\eta$. More precisely,
\[
  r_\alpha(\cdot,\lambda)*w_\eta
  \leq
  \frac{1}{\lambda-\eta}w_\eta.
\]
This is the basic algebraic mechanism behind the construction of
admissible weights below.
\end{remark}

\section{Well-Posedness and mild solution representation}
\label{sec:wellposed}

\subsection{Elliptic operators and resolvent families}

For each $i=1,\ldots,m$, let $\mathcal A_i$ denote the Dirichlet
realization in $L^2(\Omega)$ of the elliptic operator
\[
  \mathcal A_i u
  \coloneq
  -\operatorname{div}(\mathbf A_i\nabla u).
\]
Thus,
\[
  D(\mathcal A_i)=H^2(\Omega)\cap H^1_0(\Omega),
\]
and, under Assumption~\textup{(A)}, $\mathcal A_i$ is a positive
self-adjoint operator on $L^2(\Omega)$. By the ellipticity assumption
and the Poincar\'e inequality,
\[
  (\mathcal A_i u,u)_{L^2}
  \geq
  \lambda_i\|u\|_{L^2}^2,
  \qquad
  \lambda_i=\frac{\kappa_i}{C_\Omega^2}.
\]
Consequently, $-\mathcal A_i$ generates an analytic contraction
semigroup $\{e^{-t\mathcal A_i}\}_{t\geq0}$ satisfying
\[
  \|e^{-t\mathcal A_i}\|_{\mathcal B(L^2)}
  \leq e^{-\lambda_i t},
  \qquad t\geq0.
\]

For $\alpha_i\in(0,1]$, define
\begin{equation}\label{eq:op-resolvent}
  S_i(t,\mathcal A_i)
  \coloneq
  E_{\alpha_i}(-t^{\alpha_i}\mathcal A_i),
  \qquad
  R_i(t,\mathcal A_i)
  \coloneq
  t^{\alpha_i-1}
  E_{\alpha_i,\alpha_i}(-t^{\alpha_i}\mathcal A_i),
  \qquad t>0.
\end{equation}
When $\alpha_i=1$, these operators reduce to the classical heat
semigroup:
\[
  S_i(t,\mathcal A_i)
  =
  R_i(t,\mathcal A_i)
  =
  e^{-t\mathcal A_i}.
\]

By the spectral theorem, or equivalently by the subordination
principle, the operators in \eqref{eq:op-resolvent} are bounded on
$L^2(\Omega)$ and satisfy
\begin{equation}\label{eq:op-bounds}
  \|S_i(t,\mathcal A_i)\|_{\mathcal B(L^2)}
  \leq
  s_{\alpha_i}(t,\lambda_i),
  \qquad
  \|R_i(t,\mathcal A_i)\|_{\mathcal B(L^2)}
  \leq
  r_{\alpha_i}(t,\lambda_i),
  \qquad t>0.
\end{equation}

\subsection{Mild solutions and the Duhamel formula}

For a.e.\ $t\geq0$, we regard $c_{ij}(\cdot,t)$ as the bounded
multiplication operator on $L^2(\Omega)$ defined by
\[
  c_{ij}(t)v
  \coloneq
  c_{ij}(\cdot,t)v.
\]
Its operator norm satisfies
\[
  \|c_{ij}(t)\|_{\mathcal B(L^2)}
  =
  \|c_{ij}(\cdot,t)\|_{L^\infty(\Omega)}.
\]

\begin{proposition}[Well-posedness and mild solution]
\label{prop:wp}
Suppose that Assumptions~\textup{(A)} and~\textup{(C)} hold. Then, for
every initial datum
\[
  (u_1^0,\ldots,u_m^0)\in L^2(\Omega)^m,
\]
system \eqref{eq:general-system} admits a unique mild solution
\[
  (u_1,\ldots,u_m)
  \in
  C([0,\infty);L^2(\Omega)^m).
\]
Moreover, for each $i=1,\ldots,m$ and $t>0$, the solution satisfies
\begin{equation}\label{eq:duhamel}
  u_i(t)
  =
  S_i(t,\mathcal A_i)u_i^0
  -
  \int_0^t
  R_i(t-s,\mathcal A_i)
  \sum_{j=1}^m c_{ij}(s)u_j(s)\,ds.
\end{equation}
\end{proposition}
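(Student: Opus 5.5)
We take the Duhamel formula \eqref{eq:duhamel} as the \emph{definition} of a mild solution and argue by a contraction/Picard argument on $C([0,T];L^2(\Omega)^m)$ for arbitrary $T>0$. Fix $T>0$ and define, for $U=(u_1,\ldots,u_m)\in X_T\coloneq C([0,T];L^2(\Omega)^m)$,
\[
  (\Phi U)_i(t)\coloneq S_i(t,\mathcal A_i)u_i^0-\int_0^t R_i(t-s,\mathcal A_i)\sum_{j=1}^m c_{ij}(s)u_j(s)\,ds .
\]
A fixed point of $\Phi$ in $X_T$ for every $T$ gives, by uniqueness on nested intervals, a global mild solution in $C([0,\infty);L^2(\Omega)^m)$.

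First we check that $\Phi$ maps $X_T$ into itself. Strong continuity of $t\mapsto S_i(t,\mathcal A_i)u_i^0$ on $[0,T]$, including at $t=0$ where $S_i(0)=I$, follows from the spectral theorem and dominated convergence. Expand $u_i^0$ in the eigenbasis of $\mathcal A_i$. Each coefficient function $E_{\alpha_i}(-\mu_k t^{\alpha_i})$ is continuous and bounded by $1$ by Lemma~\ref{lem:ML}(i); for $\alpha_i=1$ this is the semigroup property. For the Duhamel term, set $f_i(s)=\sum_j c_{ij}(s)u_j(s)$. This is strongly measurable with $\|f_i(s)\|_{L^2}\le \mu_i\|U\|_{X_T}$. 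The operator bound \eqref{eq:op-bounds} together with $r_{\alpha_i}(t,\lambda_i)\le t^{\alpha_i-1}/\Gamma(\alpha_i)$ makes the integral absolutely convergent. Continuity in $t$ follows from the standard argument for convolutions of an integrable, strongly continuous operator kernel with a bounded function: split off $\int_t^{t+h}$, which is $O(h^{\alpha_i})$, and use dominated convergence on the remainder, where the kernel converges strongly.

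The contraction step is the core. For $U,V\in X_T$,
\[
  \|(\Phi U)_i(t)-(\Phi V)_i(t)\|_{L^2}\le \frac{\mu}{\Gamma(\alpha_m)}\int_0^t (t-s)^{\alpha_i-1}\|U(s)-V(s)\|\,ds,
\]
with $\mu=\max_i\mu_i$ and $t^{\alpha_i-1}\le \max(1,T)\,t^{\alpha_m-1}$ up to constants. The kernels are singular, so instead of shrinking $T$ we iterate. The $n$-th iterate satisfies the Gronwall-type estimate
\[
  \|\Phi^nU-\Phi^nV\|_{X_T}\le \frac{(M\Gamma(\alpha_m)T^{\alpha_m})^n}{\Gamma(n\alpha_m+1)}\|U-V\|_{X_T},
\]
which follows by induction using the Beta-integral $\int_0^t(t-s)^{\alpha-1}s^{(n-1)\alpha}ds=B(\alpha,(n-1)\alpha+1)t^{n\alpha}$. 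Since the right-hand constant tends to $0$, some power $\Phi^n$ is a contraction. Hence $\Phi$ has a unique fixed point in $X_T$.

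Alternatively, one can use the weighted norm $\sup_t e^{-\sigma t}\|U(t)\|$ with $\sigma$ large. The factor then becomes $\mu\,\mathcal L[r_{\alpha_i}](\sigma)\le\mu\sigma^{-\alpha_i}$, which is less than $1$; this is cleaner via Lemma~\ref{lem:LT}.

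Uniqueness on $[0,\infty)$ follows from uniqueness on each $[0,T]$. The same singular Gronwall inequality applied to the difference of two mild solutions gives $\|U(t)-V(t)\|\equiv0$. In particular, zero data give $u\equiv0$, which is the first assertion of Theorem~\ref{thm:main}.

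The main technical obstacle is minor but real: measurability and continuity of the Duhamel integral when $c_{ij}$ is only $L^\infty$ in time. The product $c_{ij}(s)u_j(s)$ is strongly measurable because $u_j$ is continuous and multiplication by $c_{ij}(\cdot,s)$ is a measurable family of bounded operators. The weak-singularity continuity argument then works without any regularity of $c_{ij}$ in $t$. No sign condition on $c_{ii}$ is used.
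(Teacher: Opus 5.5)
Your proposal is correct, but your main route differs from the paper's.

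The paper works directly on the half-line in the weighted space $X_\rho$, with norm $\max_i\sup_{t\ge0}e^{-\rho t}\|u_i(t)\|_{L^2}$. It uses Lemma~\ref{lem:LT} to get the explicit contraction factor $q_\rho=\max_i\mu_i/(\rho^{\alpha_i}+\lambda_i)$. A single application of Banach's theorem then gives global existence with no gluing; this is exactly the argument you list as your ``alternative''.

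Your primary argument instead works on $[0,T]$ and uses only the pointwise bound $r_{\alpha_i}(t,\lambda_i)\le t^{\alpha_i-1}/\Gamma(\alpha_i)$. It shows that $\Phi^n$ is a contraction via the Beta-integral induction, and then glues the solutions by causality. This is more elementary: there is no Laplace transform, and the argument works for any weakly singular kernel. It also yields uniqueness in all of $C([0,\infty);L^2(\Omega)^m)$ directly. The paper's proof, read literally, gives uniqueness only within the exponentially bounded class $X_\rho$. That gap is closed by running the same weighted estimate with the supremum taken over $[0,T]$, which is finite by continuity. You also treat strong measurability and continuity of the Duhamel term more carefully than the paper, which simply asserts continuity.

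One harmless slip: $\Gamma$ is decreasing on $(0,1]$, so $1/\Gamma(\alpha_i)\ge 1/\Gamma(\alpha_m)$. The prefactor in your first contraction estimate should therefore be $\max_i 1/\Gamma(\alpha_i)\le 1$ rather than $1/\Gamma(\alpha_m)$; this only changes the constant $M$.
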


\begin{proof}
Let
\[
  X_\rho
  \coloneq
  \left\{
    u=(u_1,\ldots,u_m)\in C([0,\infty);L^2(\Omega)^m):
    \|u\|_{X_\rho}<\infty
  \right\},
\]
where, for $\rho>0$,
\[
  \|u\|_{X_\rho}
  \coloneq
  \max_{1\leq i\leq m}
  \sup_{t\geq0}e^{-\rho t}\|u_i(t)\|_{L^2(\Omega)}.
\]
Then $X_\rho$ is a Banach space.

Define $\Phi=(\Phi_1,\ldots,\Phi_m)$ on $X_\rho$ by
\[
  \Phi_i(u)(t)
  \coloneq
  S_i(t,\mathcal A_i)u_i^0
  -
  \int_0^t
  R_i(t-s,\mathcal A_i)
  \sum_{j=1}^m c_{ij}(s)u_j(s)\,ds.
\]
The strong continuity of the family
$S_i(t,\mathcal A_i)$ and the local integrability of
$r_{\alpha_i}(\cdot,\lambda_i)$ imply that
\[
  \Phi_i(u)\in C([0,\infty);L^2(\Omega)).
\]
Hence $\Phi$ maps $X_\rho$ into itself.

Let $u,\widetilde u\in X_\rho$. By \eqref{eq:op-bounds}, for every
$i$ and $t\geq0$,
\begin{align*}
  &e^{-\rho t}
  \|\Phi_i(u)(t)-\Phi_i(\widetilde u)(t)\|_{L^2}\\
  &\quad\leq
  \int_0^t
  e^{-\rho(t-s)}
  r_{\alpha_i}(t-s,\lambda_i)
  \sum_{j=1}^m
  \|c_{ij}(\cdot,s)\|_{L^\infty(\Omega)}
  e^{-\rho s}
  \|u_j(s)-\widetilde u_j(s)\|_{L^2}\,ds\\
  &\quad\leq
  \mu_i\|u-\widetilde u\|_{X_\rho}
  \int_0^t
  e^{-\rho(t-s)}
  r_{\alpha_i}(t-s,\lambda_i)\,ds\\
  &\quad\leq
  \mu_i\|u-\widetilde u\|_{X_\rho}
  \int_0^\infty
  e^{-\rho \tau}r_{\alpha_i}(\tau,\lambda_i)\,d\tau.
\end{align*}
By Lemma~\ref{lem:LT},
\[
  \int_0^\infty
  e^{-\rho \tau}r_{\alpha_i}(\tau,\lambda_i)\,d\tau
  =
  \frac{1}{\rho^{\alpha_i}+\lambda_i}.
\]
Therefore,
\[
  \|\Phi(u)-\Phi(\widetilde u)\|_{X_\rho}
  \leq
  q_\rho\|u-\widetilde u\|_{X_\rho},
\]
where
\[
  q_\rho
  \coloneq
  \max_{1\leq i\leq m}
  \frac{\mu_i}{\rho^{\alpha_i}+\lambda_i}.
\]
Since $q_\rho\to0$ as $\rho\to+\infty$, we may choose $\rho>0$ such
that $q_\rho<1$. Thus $\Phi$ is a strict contraction on $X_\rho$.
The Banach fixed-point theorem yields a unique global mild solution
satisfying \eqref{eq:duhamel}.
\end{proof}

Taking $L^2(\Omega)$-norms in \eqref{eq:duhamel}, applying the triangle
inequality, and using \eqref{eq:op-bounds}, we obtain, for every
$i=1,\ldots,m$ and $t>0$,
\begin{equation}\label{eq:scalar-ineq}
  x_i(t)
  \leq
  s_{\alpha_i}(t,\lambda_i)\|u_i^0\|_{L^2(\Omega)}
  +
  \int_0^t
  r_{\alpha_i}(t-s,\lambda_i)
  \sum_{j=1}^m
  \|c_{ij}(\cdot,s)\|_{L^\infty(\Omega)}
  x_j(s)\,ds,
\end{equation}
where
\[
  x_i(t)\coloneq\|u_i(t)\|_{L^2(\Omega)}.
\]
The estimate \eqref{eq:scalar-ineq} does not use the sign of any
coefficient $c_{ij}$.

\begin{remark}
  Assumption~\textup{(S)} is not needed for well-posedness. It is used
  only in the large-time weighted closure argument.
\end{remark}

\section{The admissible weight framework}
\label{sec:framework}

\begin{definition}[Admissible weight class]\label{def:admissible}
  Let $\mu>0$. We denote by $\mathcal W(\mu)$ the class of all
  functions
  \[
    w\in C((0,\infty))\cap L^1_{\mathrm{loc}}(\R_+)
  \]
  such that $w(t)>0$ for every $t>0$ and for which there exist
  constants $a,b>0$ satisfying
  \[
    a t^{-\mu}\leq w(t)\leq b t^{-\mu},
    \qquad t\geq1.
  \]
\end{definition}

\begin{definition}[Large-time well-matched weight]
\label{def:well-int-large-time}
  Let $\mu>0$ and $T_0\ge1$. A weight $w\in\mathcal W(\mu)$ is said to
  be \emph{well matched on $[T_0,\infty)$} to
  system~\eqref{eq:general-system} if the following properties hold.

  \begin{enumerate}[label=\textup{(\roman*)}]
    \item For every $i$ such that $u_i^0\not\equiv0$, there exists a
    constant $M_i>0$ such that
    \begin{equation}\label{eq:large-time-homogeneous-domination}
      s_{\alpha_i}(t,\lambda_i)
      \|u_i^0\|_{L^2(\Omega)}
      \leq M_iw(t),
      \qquad t\ge T_0.
    \end{equation}

    \item For every $i=1,\ldots,m$, there exists a constant $K_i>0$
    such that
    \begin{equation}\label{eq:large-time-kernel-matching}
      \int_{T_0}^t
      r_{\alpha_i}(t-s,\lambda_i)w(s)\,ds
      \leq K_iw(t),
      \qquad t\ge T_0.
    \end{equation}
  \end{enumerate}
\end{definition}

\begin{proposition}[Large-time weighted closure]
\label{prop:large-time-closure}
  Let $w\in\mathcal W(\mu)$ be well matched on $[T_0,\infty)$ to
  system~\eqref{eq:general-system} in the sense of\/
  \textup{Definition~\ref{def:well-int-large-time}}. Let
  \[
    \mathsf K\coloneq\operatorname{diag}(K_1,\ldots,K_m),
  \]
  where $K_i$ is the constant in
  \eqref{eq:large-time-kernel-matching}. Assume that
  \begin{equation}\label{eq:matrix-closure-condition}
    \rho(\mathsf K\mathsf C)<1.
  \end{equation}
  Then there exists a constant $C>0$ such that
  \[
    \|u_i(t)\|_{L^2(\Omega)}
    \leq Cw(t),
    \qquad t\geq T_0,\quad i=1,\ldots,m.
  \]
\end{proposition}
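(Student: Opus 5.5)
Starting from the scalar integral inequality \eqref{eq:scalar-ineq}, I would split the Duhamel integral at $T_0$. For $t\ge T_0$ we have
\[
  x_i(t)\le s_{\alpha_i}(t,\lambda_i)\|u_i^0\|_{L^2}
  +\int_0^{T_0} r_{\alpha_i}(t-s,\lambda_i)\sum_j\gamma_{ij}x_j(s)\,ds
  +\int_{T_0}^t r_{\alpha_i}(t-s,\lambda_i)\sum_j\gamma_{ij}x_j(s)\,ds .
\]
The first term is either zero (if $u_i^0\equiv0$) or bounded by $M_iw(t)$ by \eqref{eq:large-time-homogeneous-domination}.

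The early-time term needs a separate argument. Since $x_j$ is continuous on $[0,T_0]$, it is bounded by some $X_0$, so this term is at most $\mu_iX_0\int_0^{T_0} r_{\alpha_i}(t-s,\lambda_i)\,ds$. I must show this is $\le Cw(t)$ for $t\ge T_0$, and I expect this to be the main obstacle, because it has to come from the matching property rather than from explicit kernel decay. The reason is the case $\alpha_i=1$ with $\mu=1+\alpha_m$: there $r_1$ decays exponentially, which is fine, but I would rather avoid case distinctions. One route uses the kernel decay directly. For $t\ge 2T_0$ and $\alpha_i<1$, Lemma~\ref{lem:ML}(iii) gives a bound $\lesssim T_0\,t^{-1-\alpha_i}\le C t^{-\mu}$, since $\mu\le 1+\alpha_m\le1+\alpha_i$ (and $\alpha^*\le 1+\alpha_i$ trivially). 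For $\alpha_i=1$ the bound is exponential and so also $\le Ct^{-\mu}$. On $[T_0,2T_0]$ the term is bounded by $\int_0^\infty r=1/\lambda_i$, while $w$ is bounded below there by positivity and continuity. Combined with $w\ge a t^{-\mu}$ from $\mathcal W(\mu)$, this gives the early-time term $\le N_i w(t)$ for all $t\ge T_0$. Hence
\[
  x_i(t)\le B_i w(t)+\int_{T_0}^t r_{\alpha_i}(t-s,\lambda_i)\sum_j\gamma_{ij}x_j(s)\,ds,\qquad t\ge T_0 .
\]

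For the closure I would avoid assuming a priori that $x_j/w$ is bounded, since that is what we want to prove. Instead I work on finite horizons: for $T>T_0$, set $y_j(T)\coloneq\sup_{T_0\le s\le T}x_j(s)/w(s)$, which is finite because $x_j$ is continuous and $w$ is continuous and positive. For $t\in[T_0,T]$ insert $x_j(s)\le y_j(T)w(s)$ and use \eqref{eq:large-time-kernel-matching}. This gives $x_i(t)\le B_iw(t)+K_i\sum_j\gamma_{ij}y_j(T)w(t)$, so componentwise
\[
  \mathbf y(T)\le \mathbf B+\mathsf K\mathsf C\,\mathbf y(T).
\]
The matrix $\mathsf K\mathsf C$ is nonnegative with spectral radius $<1$ by \eqref{eq:matrix-closure-condition}. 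Therefore $(I-\mathsf K\mathsf C)^{-1}=\sum_n(\mathsf K\mathsf C)^n$ is nonnegative. Iterating the inequality (or applying this inverse) yields $\mathbf y(T)\le (I-\mathsf K\mathsf C)^{-1}\mathbf B$, uniformly in $T$. Letting $T\to\infty$ gives $x_i(t)\le Cw(t)$ for all $t\ge T_0$.
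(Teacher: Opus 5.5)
Your proposal is correct and is essentially the paper's proof: you use the same split of the Duhamel integral at $T_0$, and you bound the early-time term the same way, via Lemma~\ref{lem:ML}(iii) together with $\mu\le 1+\alpha_i$. Like the paper, you take that inequality from the constructed weights, although it is in fact forced by \eqref{eq:large-time-kernel-matching} whenever $\alpha_i<1$. The only other difference is cosmetic: the paper closes with a finite-horizon scalar norm weighted by $p=(I-q^{-1}\mathsf K\mathsf C)^{-1}\mathbf 1$, while you solve the vector inequality $\mathbf y(T)\le\mathbf B+\mathsf K\mathsf C\,\mathbf y(T)$ directly with the nonnegative Neumann series, and the two are equivalent.
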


\begin{proof}
  Set
  \[
    x_i(t)\coloneq\|u_i(t)\|_{L^2(\Omega)}.
  \]
  Since each $x_i$ is continuous on $[0,\infty)$, there exists a
  constant $B_{T_0}>0$ such that
  \[
    \max_{1\leq j\leq m}\sup_{0\leq s\leq T_0}x_j(s)
    \leq B_{T_0}.
  \]

  We first isolate the contribution of the interval $[0,T_0]$. By the
  scalar inequality \eqref{eq:scalar-ineq}, for $t\geq T_0$,
  \begin{align}
    x_i(t)
    &\leq
    s_{\alpha_i}(t,\lambda_i)\|u_i^0\|_{L^2(\Omega)}
    \notag\\
    &\quad+
    \int_0^{T_0}
    r_{\alpha_i}(t-s,\lambda_i)
    \sum_{j=1}^m
    \|c_{ij}(\cdot,s)\|_{L^\infty(\Omega)}
    x_j(s)\,ds
    \notag\\
    &\quad+
    \int_{T_0}^t
    r_{\alpha_i}(t-s,\lambda_i)
    \sum_{j=1}^m
    \gamma_{ij}x_j(s)\,ds.
    \label{eq:matrix-split-ineq}
  \end{align}

  By Definition~\ref{def:well-int-large-time}, the homogeneous term is
  bounded by $M_iw(t)$ for $t\geq T_0$. Moreover, the early-time
  contribution is bounded by a multiple of $w(t)$. Indeed, for
  $t\geq2T_0$ and $0\leq s\leq T_0$, we have $t-s\asymp t$. If
  $\alpha_i<1$, Lemma~\ref{lem:ML} gives
  \[
    r_{\alpha_i}(t-s,\lambda_i)
    =
    O(t^{-1-\alpha_i}),
    \qquad t\to\infty,
  \]
  uniformly with respect to $s\in[0,T_0]$. If $\alpha_i=1$, then
  \[
    r_1(t-s,\lambda_i)
    =
    e^{-\lambda_i(t-s)}
    =
    O(e^{-\lambda_i t/2}).
  \]
  For each weight constructed below, its decay exponent $\mu$ satisfies
  \[
    \mu\leq1+\alpha_i,
    \qquad i=1,\ldots,m.
  \]
  Hence, in either case,
  \[
    \int_0^{T_0}
    r_{\alpha_i}(t-s,\lambda_i)
    \sum_{j=1}^m
    \|c_{ij}(\cdot,s)\|_{L^\infty(\Omega)}
    x_j(s)\,ds
    \leq C_iw(t),
    \qquad t\geq2T_0,
  \]
  for some constant $C_i>0$. After enlarging $C_i$ to cover the compact
  interval $[T_0,2T_0]$, we obtain constants $A_i>0$ such that
  \begin{equation}\label{eq:large-time-vector-ineq}
    x_i(t)
    \leq
    A_iw(t)
    +
    \int_{T_0}^t
    r_{\alpha_i}(t-s,\lambda_i)
    \sum_{j=1}^m\gamma_{ij}x_j(s)\,ds,
    \qquad t\geq T_0.
  \end{equation}

  Set
  \[
    \mathsf B\coloneq\mathsf K\mathsf C.
  \]
  By \eqref{eq:matrix-closure-condition}, choose $q$ such that
  $\rho(q^{-1}\mathsf B)<1$.
  Since $\mathsf B$ has nonnegative entries and
  \(\rho(q^{-1}\mathsf B)<1\), the matrix
  \[
    I-\frac1q\mathsf B
  \]
  is invertible and has a nonnegative inverse. Define
  \[
    p\coloneq
    \left(I-\frac1q\mathsf B\right)^{-1}\mathbf 1,
    \qquad
    \mathbf 1\coloneq(1,\ldots,1)^{\mathsf T}.
  \]
  Then $p_i\geq1$ for every $i$, and
  \[
    \mathsf Bp
    =
    q(p-\mathbf 1)
    \leq qp.
    \tag{\(\ast\)}
  \]

  For $T>T_0$, introduce the weighted norm
  \[
    \|x\|_{w,p;T}
    \coloneq
    \max_{1\leq i\leq m}
    \sup_{T_0\leq t\leq T}
    \frac{x_i(t)}{p_iw(t)}.
  \]
  This quantity is finite because each $x_i$ and the weight $w$ are
  continuous and
  \(w>0\) on the compact interval $[T_0,T]$. For
  $T_0\leq s\leq T$, we have
  \[
    x_j(s)
    \leq
    p_jw(s)\|x\|_{w,p;T}.
  \]
  Hence, by \eqref{eq:large-time-vector-ineq} and the matching property
  \eqref{eq:large-time-kernel-matching},
  \begin{align*}
    x_i(t)
    &\leq
    A_iw(t)
    +
    \|x\|_{w,p;T}
    \sum_{j=1}^m\gamma_{ij}p_j
    \int_{T_0}^t
    r_{\alpha_i}(t-s,\lambda_i)w(s)\,ds\\
    &\leq
    A_iw(t)
    +
    K_i\sum_{j=1}^m\gamma_{ij}p_j\,
    \|x\|_{w,p;T}w(t)\\
    &=
    A_iw(t)
    +
    (\mathsf Bp)_i\,
    \|x\|_{w,p;T}w(t).
  \end{align*}
  Using \((\ast)\), it follows that
  \[
    x_i(t)
    \leq
    A_iw(t)
    +
    qp_i\|x\|_{w,p;T}w(t).
  \]
  Dividing by $p_iw(t)$ and taking the maximum over
  $i=1,\ldots,m$ and $t\in[T_0,T]$, we obtain
  \[
    \|x\|_{w,p;T}
    \leq
    \max_{1\leq i\leq m}\frac{A_i}{p_i}
    +
    q\|x\|_{w,p;T}.
  \]
  Therefore,
  \[
    \|x\|_{w,p;T}
    \leq
    \frac{1}{1-q}
    \max_{1\leq i\leq m}\frac{A_i}{p_i}.
  \]
  The right-hand side is independent of $T$. Letting $T\to\infty$
  yields
  \[
    x_i(t)
    \leq
    \frac{p_i}{1-q}
    \max_{1\leq j\leq m}\frac{A_j}{p_j}\,w(t),
    \qquad t\geq T_0.
  \]
  Since $m$ is finite, this proves the claimed estimate.
\end{proof}

\section{Construction of the Optimal Weight}
\label{sec:weights}

The weights constructed below are intended to control the solution only
for large times. This is necessary because the homogeneous terms in the
mild-solution formula need not be dominated near $t=0$ by weights
constructed through resolvent convolutions.

We repeatedly use standard convolution theorems for regularly varying
functions; see, for example, \cite[Section~4.2]{BGT}. In the particular
situations arising below, if $f\ge0$ is integrable and has a lighter
tail than a positive regularly varying function $g$, then
\[
  (f*g)(t)
  \sim
  \left(\int_0^\infty f(s)\,ds\right)g(t),
  \qquad t\to\infty.
\]
If $f$ and $g$ are positive, integrable, and regularly varying with
the same tail index less than $-1$, then
\[
  (f*g)(t)
  \sim
  \left(\int_0^\infty f(s)\,ds\right)g(t)
  +
  \left(\int_0^\infty g(s)\,ds\right)f(t).
\]

\subsection{The case \texorpdfstring{$\alpha^*<1$}{alpha* < 1}}

Suppose first that
\[
  \alpha^*<1.
\]
In this case the elementary weight
\begin{equation}\label{eq:weight-fractional-case}
  w(t)\coloneq(1+t)^{-\alpha^*}
\end{equation}
is sufficient.

\begin{lemma}[Weight for the case $\alpha^*<1$]
\label{lem:weight-alpha-star-less-one}
  Suppose that $\alpha^*<1$ and let $w$ be given by
  \eqref{eq:weight-fractional-case}. Then
  \[
    w\in\mathcal W(\alpha^*).
  \]
  Moreover, for every $i=1,\ldots,m$,
  \begin{equation}\label{eq:frac-case-interaction-asymptotic}
    \int_0^t
    r_{\alpha_i}(t-s,\lambda_i)w(s)\,ds
    \sim
    \frac{1}{\lambda_i}w(t),
    \qquad t\to\infty.
  \end{equation}
\end{lemma}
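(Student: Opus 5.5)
The membership $w\in\mathcal W(\alpha^*)$ is immediate. The weight $w$ is continuous and positive on $[0,\infty)$, hence locally integrable. For $t\ge1$ we have $t\le 1+t\le 2t$, so
\[
  2^{-\alpha^*}t^{-\alpha^*}\le w(t)\le t^{-\alpha^*}.
\]
Thus Definition~\ref{def:admissible} holds with $a=2^{-\alpha^*}$ and $b=1$. The substance of the lemma is the asymptotic \eqref{eq:frac-case-interaction-asymptotic}. I plan to derive it from the first convolution principle for regularly varying functions quoted at the start of this section. Take $f=r_{\alpha_i}(\cdot,\lambda_i)$ and $g=w$. Then $f\ge0$ and, by Lemma~\ref{lem:LT}, $\int_0^\infty f=1/\lambda_i$. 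Also $g$ is regularly varying with index $-\alpha^*>-1$.

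To keep the argument self-contained I would verify the principle directly. Fix $\delta\in(0,1/2)$ and split $\int_0^t f(t-s)w(s)\,ds$ at $s=\delta t$ and $s=(1-\delta)t$.

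\textbf{Middle and tail pieces.} On $[\delta t,t]$ we have $w(s)\le w(\delta t)\le C_\delta w(t)$. Hence the contribution from $[\delta t,(1-\delta)t]$ is bounded by
\[
  C_\delta\, w(t)\int_{\delta t}^{\infty} f.
\]
This is $o(w(t))$ as $t\to\infty$ for fixed $\delta$. On $[(1-\delta)t,t]$ the substitution $\tau=t-s$ gives
\[
  \int_0^{\delta t} f(\tau)\,w(t-\tau)\,d\tau .
\]
Since $w(t-\tau)/w(t)$ lies between $1$ and $(1-\delta)^{-\alpha^*}$ for large $t$, this piece lies between $w(t)\int_0^{\delta t}f$ and $(1-\delta)^{-\alpha^*}w(t)\int_0^{\delta t}f$. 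Both bounds tend to the right constants times $1/\lambda_i$. Note that $f$ is integrable near $0$ by Lemma~\ref{lem:ML}(i), or is $e^{-\lambda_i t}$ when $\alpha_i=1$.

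\textbf{Early piece.} On $[0,\delta t]$, the case $\alpha_i<1$ uses $f(t-s)\le C t^{-1-\alpha_i}$ from Lemma~\ref{lem:ML}(iii), together with
\[
  \int_0^{\delta t}w\le C t^{1-\alpha^*}.
\]
The contribution is therefore $O(t^{-\alpha_i-\alpha^*})=o(t^{-\alpha^*})$. When $\alpha_i=1$ the kernel is exponentially small, so the contribution is trivially negligible.

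\textbf{Conclusion.} Collecting the three pieces gives
\[
  \limsup_{t\to\infty}\frac{(f*w)(t)}{w(t)}\le (1-\delta)^{-\alpha^*}\lambda_i^{-1},
  \qquad
  \liminf_{t\to\infty}\frac{(f*w)(t)}{w(t)}\ge \lambda_i^{-1}.
\]
Letting $\delta\to0$ yields \eqref{eq:frac-case-interaction-asymptotic}.

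The only delicate step is the early piece. It is where the condition $\alpha^*<1$ matters: $w$ is not integrable at infinity, yet its integral grows only like $t^{1-\alpha^*}$, which the kernel decay $t^{-1-\alpha_i}$ beats.
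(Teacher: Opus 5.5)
Your proof is correct and has the same skeleton as the paper's. Both reduce \eqref{eq:frac-case-interaction-asymptotic} to the convolution principle with $f=r_{\alpha_i}(\cdot,\lambda_i)$ and $g=w$. Both use $\int_0^\infty f=1/\lambda_i$ from Lemma~\ref{lem:LT} and the fact that $f$ decays faster than $w$.

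The difference is in how the principle is obtained. The paper cites the regularly-varying convolution theorem \cite[Section~4.2]{BGT}. You prove the needed special case directly, splitting at $s=\delta t$ and $s=(1-\delta)t$. Your split uses only explicit properties of the power weight: monotonicity, $w(\delta t)\le\delta^{-\alpha^*}w(t)$, $1\le w(t-\tau)/w(t)\le(1-\delta)^{-\alpha^*}$ for $0\le\tau\le\delta t$, and $\int_0^{\delta t}w\lesssim t^{1-\alpha^*}$.

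The citation is shorter. Your version is self-contained and shows exactly what the early piece requires. Because $w$ is not integrable, the bare comparison $f=o(w)$ would not suffice for a general integrable kernel, and that is the form in which the paper phrases the hypothesis. What closes the estimate is the genuine decay $f(t)=O(t^{-1-\alpha_i})$ from Lemma~\ref{lem:ML}(iii), which is faster than $1/t$. It makes $\int_0^{\delta t}f(t-s)w(s)\,ds=O(t^{-\alpha_i-\alpha^*})$ negligible, as you showed.

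Your closing remark slightly misplaces the emphasis. The early-piece estimate does not really hinge on $\alpha^*<1$: it goes through for $\alpha^*=1$ as well, with $\log t$ in place of $t^{1-\alpha^*}$. What it hinges on is the extra factor $t^{-\alpha_i}$, that is, $\alpha_i>0$.
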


\begin{proof}
  The assertion $w\in\mathcal W(\alpha^*)$ is immediate.

  Since $\alpha^*\in(0,1)$, the function $w$ is regularly varying at
  infinity with index $-\alpha^*$. On the other hand,
  \[
    \int_0^\infty r_{\alpha_i}(s,\lambda_i)\,ds
    =
    \frac{1}{\lambda_i}
  \]
  by Lemma~\ref{lem:LT}.  If $\alpha_i\in(0,1)$, then
  \[
    r_{\alpha_i}(t,\lambda_i)
    =
    O(t^{-1-\alpha_i})
    =
    o(t^{-\alpha^*})
    =
    o(w(t)),
    \qquad t\to\infty.
  \]
  If $\alpha_i=1$, then
  \[
    r_1(t,\lambda_i)
    =
    e^{-\lambda_i t}
    =
    o(w(t)),
    \qquad t\to\infty.
  \]
  The standard convolution theorem for an integrable kernel and a
  regularly varying function therefore gives
  \[
    r_{\alpha_i}(\cdot,\lambda_i)*w
    \sim
    \left(\int_0^\infty r_{\alpha_i}(s,\lambda_i)\,ds\right)w
    =
    \frac{1}{\lambda_i}w,
  \]
  which is \eqref{eq:frac-case-interaction-asymptotic}.
\end{proof}

\subsection{The case \texorpdfstring{$\alpha^*=1$}{alpha* = 1}}

We next assume that
\[
  \alpha^*=1.
\]
Since the fractional orders are arranged as
\[
  1\ge\alpha_1\ge\cdots\ge\alpha_m>0
\]
and $\alpha_m<1$, all nonzero initial data occur in components of
integer order. The desired decay exponent is
\[
  \mu=1+\alpha_m.
\]

Set
\[
  I_m
  \coloneq
  \{i\in\{1,\ldots,m\}:\alpha_i=\alpha_m\}.
\]
Since $\alpha_m$ is the smallest order, the set $I_m$ is nonempty.
The
auxiliary parameter $\eta>0$ will be chosen sufficiently small in
Lemma~\ref{lem:matching-constants-matrix}.

For a parameter $\eta>0$, define
\begin{equation}\label{eq:weight-integer-case-seed}
  w_0(t)\coloneq(1+t)^{-(1+\alpha_m)}
\end{equation}
and
\begin{equation}\label{eq:weight-integer-case}
  w(t)
  \coloneq
  \int_0^t
  r_{\alpha_m}(t-s,\eta)w_0(s)\,ds.
\end{equation}

\begin{lemma}[Decay of the integer-case weight]
\label{lem:weight-alpha-star-one-decay}
  The weight $w$ defined by \eqref{eq:weight-integer-case} satisfies
  \[
    w\in\mathcal W(1+\alpha_m).
  \]
  More precisely,
  \[
    w(t)
    \sim
    \left(
      \frac1\eta+
      \frac{1}{\eta^2\Gamma(1-\alpha_m)}
    \right)t^{-(1+\alpha_m)},
    \qquad t\to\infty.
  \]
\end{lemma}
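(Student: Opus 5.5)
The plan is to apply the second convolution theorem for regularly varying functions recalled at the start of Section~\ref{sec:weights}, with
\[
  f=r_{\alpha_m}(\cdot,\eta),
  \qquad
  g=w_0.
\]
Both functions are positive and integrable on $(0,\infty)$. By Lemma~\ref{lem:LT}, $\int_0^\infty r_{\alpha_m}(s,\eta)\,ds=1/\eta$. A direct computation gives
\[
  \int_0^\infty(1+s)^{-(1+\alpha_m)}\,ds=\frac{1}{\alpha_m}.
\]
By Lemma~\ref{lem:ML}(ii), $r_{\alpha_m}(t,\eta)\sim\frac{\alpha_m}{\eta^2\Gamma(1-\alpha_m)}t^{-1-\alpha_m}$. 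Also $w_0(t)\sim t^{-(1+\alpha_m)}$. So both functions are regularly varying with the same index $-(1+\alpha_m)<-1$, and the theorem yields
\[
  w(t)\sim\frac1\eta\,t^{-(1+\alpha_m)}+\frac{1}{\alpha_m}\cdot\frac{\alpha_m}{\eta^2\Gamma(1-\alpha_m)}\,t^{-(1+\alpha_m)}.
\]
This is exactly the stated constant.

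Since the cited theorem is used as a black box, I would also sketch a direct proof of this step. Split $\int_0^t$ into $[0,t/2]$ and $[t/2,t]$ and substitute on the second piece. This gives
\[
  w(t)=\int_0^{t/2}f(t-s)g(s)\,ds+\int_0^{t/2}f(s)g(t-s)\,ds.
\]
On the first piece, $f(t-s)/t^{-1-\alpha_m}$ is uniformly bounded for $s\le t/2$ and $t\ge2$, by Lemma~\ref{lem:ML}(iii) together with $t-s\ge t/2$. It converges pointwise to $\frac{\alpha_m}{\eta^2\Gamma(1-\alpha_m)}$. Since $g\in L^1$, dominated convergence gives the contribution $\frac{1}{\alpha_m}\cdot\frac{\alpha_m}{\eta^2\Gamma(1-\alpha_m)}$ times $t^{-(1+\alpha_m)}$. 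On the second piece, $g(t-s)/t^{-1-\alpha_m}$ is bounded by $2^{1+\alpha_m}$ and tends to $1$, and $f\in L^1$. The integrable singularity of $f$ at $0$, namely $f(s)\le s^{\alpha_m-1}/\Gamma(\alpha_m)$ from Lemma~\ref{lem:ML}(i), causes no trouble. Dominated convergence then gives the contribution $\frac1\eta t^{-(1+\alpha_m)}$.

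It remains to check the membership $w\in\mathcal W(1+\alpha_m)$ of Definition~\ref{def:admissible}.
\begin{itemize}
  \item Positivity for $t>0$ holds because both integrands are strictly positive.
  \item Continuity on $(0,\infty)$ follows from the continuity of the convolution of an $L^1_{\mathrm{loc}}$ function with a bounded continuous function. The bound $w(t)\le\|w_0\|_\infty\int_0^t f\le 1/\eta$ also gives local integrability.
  \item For the two-sided bounds on $t\ge1$: the asymptotic equivalence gives $a t^{-\mu}\le w(t)\le bt^{-\mu}$ for $t\ge T$ with some large $T$. On the compact interval $[1,T]$, $w$ is continuous and positive, so the ratio $w(t)t^{\mu}$ is bounded above and below there. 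Adjusting $a,b$ then covers all $t\ge1$.
\end{itemize}

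The main obstacle is the uniform domination near the endpoint $s\to t$ in the first piece and near $s\to0$ in the second. The splitting at $t/2$ is exactly what keeps the singular part of $f$ paired with the bounded factor $g(t-s)$, so I expect it to handle both.
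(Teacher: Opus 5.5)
Your proposal is correct and follows the paper's proof: both apply the same-index convolution theorem for integrable regularly varying functions, using $\int_0^\infty r_{\alpha_m}(s,\eta)\,ds=1/\eta$ and $\int_0^\infty w_0(s)\,ds=1/\alpha_m$. Your split-at-$t/2$ dominated-convergence sketch and your compactness argument on $[1,T]$ are both valid; they spell out what the paper leaves to the cited theorem and to the phrase ``hence $w\in\mathcal W(1+\alpha_m)$''.
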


\begin{proof}
  Both $w_0$ and $r_{\alpha_m}(\cdot,\eta)$ are positive, integrable,
  and regularly varying at infinity with index $-(1+\alpha_m)$.
  Indeed,
  \[
    w_0(t)\sim t^{-(1+\alpha_m)}
  \]
  and, by Lemma~\ref{lem:ML},
  \[
    r_{\alpha_m}(t,\eta)
    \sim
    \frac{\alpha_m}
    {\eta^2\Gamma(1-\alpha_m)}
    t^{-(1+\alpha_m)}.
  \]
  The convolution theorem for integrable regularly varying functions
  gives
  \begin{align*}
    w(t)
    &=
    \bigl(r_{\alpha_m}(\cdot,\eta)*w_0\bigr)(t)\\
    &\sim
    \left(\int_0^\infty r_{\alpha_m}(s,\eta)\,ds\right)w_0(t)
    +
    \left(\int_0^\infty w_0(s)\,ds\right)
    r_{\alpha_m}(t,\eta).
  \end{align*}
  Since
  \[
    \int_0^\infty r_{\alpha_m}(s,\eta)\,ds
    =
    \frac1\eta
  \]
  and
  \[
    \int_0^\infty w_0(s)\,ds
    =
    \frac1{\alpha_m},
  \]
  the right-hand side is asymptotic to a positive constant times
  $t^{-(1+\alpha_m)}$. Hence
  \[
    w\in\mathcal W(1+\alpha_m). \qedhere
  \]
\end{proof}

\begin{lemma}[Matching for the slowest-order components]
\label{lem:matching-minimal-order}
  Let $i$ satisfy $\alpha_i=\alpha_m$. Then
  \begin{equation}\label{eq:minimal-order-matching}
    \int_0^t
    r_{\alpha_m}(t-s,\lambda_i)w(s)\,ds
    \leq
    \frac{1}{\lambda_i-\eta}w(t),
    \qquad t>0.
  \end{equation}
\end{lemma}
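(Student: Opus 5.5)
This inequality is a direct application of the Matching Lemma~\ref{lem:well-interacted}. We apply it with the order $\alpha\coloneq\alpha_m\in(0,1)$, the larger parameter $\lambda\coloneq\lambda_i$, the smaller parameter $\eta$, and the source $h\coloneq w_0=(1+\cdot)^{-(1+\alpha_m)}$. Since $\alpha_i=\alpha_m$, the kernel on the left-hand side of \eqref{eq:minimal-order-matching} is exactly $r_{\alpha}(\cdot,\lambda)$ with $\alpha=\alpha_m$. By \eqref{eq:weight-integer-case}, the weight is exactly $w=w_\eta=r_{\alpha_m}(\cdot,\eta)*w_0$. The lemma then gives the estimate for every $t>0$ with constant $1/(\lambda_i-\eta)$.

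It remains to check the hypotheses of Lemma~\ref{lem:well-interacted}.
\begin{enumerate}[label=\textup{(\arabic*)}]
  \item The source $h=w_0$ is continuous and positive on $[0,\infty)$, hence lies in $L^1_{\mathrm{loc}}(\R_+)$ and satisfies $h\ge0$.
  \item The parameters must satisfy $\lambda_i>\eta>0$. This is where the stipulation that $\eta$ is ``sufficiently small'' enters. We impose $0<\eta<\min_{i\in I_m}\lambda_i$; since $I_m$ is finite and each $\lambda_i=\kappa_i/C_\Omega^2>0$, this minimum is positive. The constraint is consistent with the later choice of $\eta$ in Lemma~\ref{lem:matching-minimal-order}'s companion result Lemma~\ref{lem:matching-constants-matrix}.
\end{enumerate}

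For completeness, the mechanism is the following. Apply Tonelli's theorem, which is valid because all integrands are nonnegative by Lemma~\ref{lem:ML}(i), to write
\[
  r_{\alpha_m}(\cdot,\lambda_i)*\bigl(r_{\alpha_m}(\cdot,\eta)*w_0\bigr)
  =\bigl(r_{\alpha_m}(\cdot,\lambda_i)*r_{\alpha_m}(\cdot,\eta)\bigr)*w_0 .
\]
Then use the resolvent identity of Lemma~\ref{lem:resolvent-id}, which bounds the inner convolution by $r_{\alpha_m}(\cdot,\eta)/(\lambda_i-\eta)$. Convolving this bound with $w_0$ returns $w/(\lambda_i-\eta)$.

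There is no real obstacle here. The only point requiring care is the order restriction: the resolvent identity needs both kernels to carry the same fractional order. That is exactly why this lemma is stated only for $i\in I_m$. Components with $\alpha_i>\alpha_m$ will instead need an asymptotic argument based on the regularly varying convolution theorem, and that is left to the subsequent lemma.
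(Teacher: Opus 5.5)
Your proposal is correct and follows the paper's proof: you identify $w=r_{\alpha_m}(\cdot,\eta)*w_0$ and apply Lemma~\ref{lem:well-interacted} with $\alpha=\alpha_m$ and $h=w_0$, under the requirement $\eta<\lambda_i$. Your extra unpacking via Tonelli and Lemma~\ref{lem:resolvent-id} just restates the proof of that matching lemma.
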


\begin{proof}
  By \eqref{eq:weight-integer-case},
  \[
    w=r_{\alpha_m}(\cdot,\eta)*w_0.
  \]
  Since $\lambda_i>\eta$, Lemma~\ref{lem:well-interacted}, with
  $\alpha=\alpha_m$ and $h=w_0$, yields
  \[
    r_{\alpha_m}(\cdot,\lambda_i)*w
    \leq
    \frac{1}{\lambda_i-\eta}w. \qedhere
  \]
\end{proof}

\begin{lemma}[Matching for the higher-order components]
\label{lem:matching-higher-order}
  Let $i$ satisfy $\alpha_i>\alpha_m$. Then
  \begin{equation}\label{eq:higher-order-matching-asymptotic}
    \int_0^t
    r_{\alpha_i}(t-s,\lambda_i)w(s)\,ds
    \sim
    \frac{1}{\lambda_i}w(t),
    \qquad t\to\infty.
  \end{equation}
\end{lemma}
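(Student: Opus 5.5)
We prove \eqref{eq:higher-order-matching-asymptotic} exactly as in Lemma~\ref{lem:weight-alpha-star-less-one}, using the first convolution theorem quoted at the beginning of Section~\ref{sec:weights}: an integrable nonnegative kernel convolved with a positive regularly varying function that has a heavier tail. We take $f\coloneq r_{\alpha_i}(\cdot,\lambda_i)$ and $g\coloneq w$, and verify three hypotheses.

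First, $f\ge0$ and $f$ is integrable, with
\[
  \int_0^\infty f(s)\,ds=\frac1{\lambda_i}
\]
by Lemma~\ref{lem:LT}. Second, $g$ is positive and regularly varying at infinity with index $-(1+\alpha_m)$. Positivity holds because $r_{\alpha_m}(\cdot,\eta)>0$ and $w_0>0$. Regular variation follows from Lemma~\ref{lem:weight-alpha-star-one-decay}, which gives
\[
  w(t)\sim c_\eta t^{-(1+\alpha_m)}
\]
with an explicit constant $c_\eta>0$, and a function asymptotic to a constant multiple of a pure power is regularly varying with that index. Third, $f$ has a lighter tail than $g$. If $\alpha_i=1$, then $f(t)=e^{-\lambda_i t}=o(t^{-(1+\alpha_m)})$. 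If $\alpha_m<\alpha_i<1$, Lemma~\ref{lem:ML}(iii) gives
\[
  f(t)\le Ct^{-1-\alpha_i}=o\bigl(t^{-1-\alpha_m}\bigr)=o(w(t)).
\]
The strict inequality $\alpha_i>\alpha_m$ is exactly what is needed here. Applying the theorem yields $f*w\sim\lambda_i^{-1}w$.

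The main obstacle is the second convolution fact in the paper, which covers the case where $f$ and $g$ have the same tail index. That case does not arise here, but it shows that tail comparison genuinely matters, so we check the quoted convolution theorem directly rather than cite it. Fix $\varepsilon\in(0,1/2)$ and split $\int_0^t f(t-s)w(s)\,ds$ into the ranges $s\in[0,\varepsilon t]$, $s\in[\varepsilon t,(1-\varepsilon)t]$, and $s\in[(1-\varepsilon)t,t]$.

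\begin{itemize}
  \item \textbf{First range.} Here $t-s\ge(1-\varepsilon)t$, so $f(t-s)\le C t^{-1-\alpha_i}$. Since $w\in L^1(\R_+)$, this piece is $O(t^{-1-\alpha_i})=o(w(t))$.
  \item \textbf{Middle range.} Both arguments are comparable to $t$, so the piece is bounded by $t\cdot O(t^{-1-\alpha_i})\,O(t^{-1-\alpha_m})=o(w(t))$.
  \item \textbf{Last range.} Uniform convergence $w(t-\tau)/w(t)\to1$ for $\tau\le\varepsilon t$ holds up to factors $(1-\varepsilon)^{-(1+\alpha_m)}$. Dominated convergence in $\tau=t-s$ then gives the bounds
  \[
    \lambda_i^{-1}\le\liminf\frac{(\cdot)}{w(t)},\qquad \limsup\frac{(\cdot)}{w(t)}\le(1-\varepsilon)^{-(1+\alpha_m)}\lambda_i^{-1}.
  \]
\end{itemize}

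Letting $\varepsilon\to0$ yields \eqref{eq:higher-order-matching-asymptotic}. This comparison also holds for $\alpha_i=1$, since then $f$ decays exponentially and the first two ranges are even smaller.
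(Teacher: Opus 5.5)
Your proposal is correct and follows the paper's proof. Like the paper, you combine the convolution theorem for a nonnegative integrable kernel whose tail is lighter than that of a regularly varying function with three facts: $\int_0^\infty r_{\alpha_i}(s,\lambda_i)\,ds=1/\lambda_i$, $w(t)\asymp t^{-(1+\alpha_m)}$, and $r_{\alpha_i}(t,\lambda_i)=o(w(t))$, checked separately for $\alpha_m<\alpha_i<1$ and $\alpha_i=1$. Your three-range splitting only verifies that convolution theorem directly, where the paper cites it, which makes your argument self-contained.
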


\begin{proof}
  By Lemma~\ref{lem:weight-alpha-star-one-decay},
  \[
    w(t)\asymp t^{-(1+\alpha_m)}.
  \]

  First suppose that $\alpha_i\in(\alpha_m,1)$. Then
  \[
    r_{\alpha_i}(t,\lambda_i)
    =
    O(t^{-(1+\alpha_i)})
    =
    o(t^{-(1+\alpha_m)})
    =
    o(w(t)).
  \]
  Since
  \[
    \int_0^\infty r_{\alpha_i}(s,\lambda_i)\,ds
    =
    \frac1{\lambda_i},
  \]
  the convolution theorem for an integrable kernel and a regularly
  varying function gives
  \[
    r_{\alpha_i}(\cdot,\lambda_i)*w
    \sim
    \frac1{\lambda_i}w.
  \]

  If $\alpha_i=1$, then
  \[
    r_1(t,\lambda_i)=e^{-\lambda_i t}=o(w(t)),
  \]
  and
  \[
    \int_0^\infty e^{-\lambda_i s}\,ds=\frac1{\lambda_i}.
  \]
  The same convolution theorem therefore yields
  \[
    r_1(\cdot,\lambda_i)*w
    \sim
    \frac1{\lambda_i}w.
  \]
  This proves \eqref{eq:higher-order-matching-asymptotic}.
\end{proof}

\subsection{Large-time matching}

\begin{lemma}[Choice of matching constants]
\label{lem:matching-constants-matrix}
  Suppose that $\rho(\mathsf\Lambda^{-1}\mathsf C)<1$.
  Then there exist $T_0\geq1$ and positive constants
  $K_1,\ldots,K_m$ 
  such that, in each of the cases \textup{(1)} and \textup{(2)} below,
the corresponding weight $w$ is well matched on $[T_0,\infty)$ and
  \begin{equation}\label{eq:matrix-matching-condition}
    \rho(\mathsf K\mathsf C)<1,
    \qquad
    \mathsf K\coloneq\operatorname{diag}(K_1,\ldots,K_m).
  \end{equation}

  More precisely, the following alternatives hold.

  \begin{enumerate}
    \item[\textup{(1)}] Suppose that $\alpha^*<1$ and let
    \[
      w(t)=(1+t)^{-\alpha^*}.
    \]
    Then, for every $\varepsilon>0$, there exists $T_\varepsilon\geq1$
    such that
    \begin{equation}\label{eq:matching-constants-fractional}
      \int_{T_\varepsilon}^t
      r_{\alpha_i}(t-s,\lambda_i)w(s)\,ds
      \leq
      \left(\frac1{\lambda_i}+\varepsilon\right)w(t),
      \qquad
      t\geq T_\varepsilon,
    \end{equation}
    for every $i=1,\ldots,m$.

    \item[\textup{(2)}] Suppose that $\alpha^*=1$. 
    Set
\[
  I_m
  \coloneq
  \{i\in\{1,\ldots,m\}\colon \alpha_i=\alpha_m\}.
\]
    For every sufficiently small $\eta>0$, define
    \[
      w(t)
      \coloneq
      \int_0^t
      r_{\alpha_m}(t-s,\eta)(1+s)^{-(1+\alpha_m)}\,ds.
    \]
    Then, for every $i\in I_m$,
    \begin{equation}\label{eq:matching-constants-minimal}
      \int_{T_0}^t
      r_{\alpha_m}(t-s,\lambda_i)w(s)\,ds
      \leq
      \frac{1}{\lambda_i-\eta}w(t),
      \qquad
      t\geq T_0,
    \end{equation}
    provided that $\eta<\lambda_i$. For every $i\notin I_m$ and every
    $\varepsilon>0$, there exists $T_\varepsilon\geq1$ such that
    \begin{equation}\label{eq:matching-constants-higher}
      \int_{T_\varepsilon}^t
      r_{\alpha_i}(t-s,\lambda_i)w(s)\,ds
      \leq
      \left(\frac1{\lambda_i}+\varepsilon\right)w(t),
      \qquad
      t\geq T_\varepsilon.
    \end{equation}
  \end{enumerate}
\end{lemma}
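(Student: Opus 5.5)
Each matching inequality will be obtained from the asymptotics already proved in Section~\ref{sec:weights}. The constants $K_i$ will then be chosen as small perturbations of $\lambda_i^{-1}$, so that continuity of the spectral radius gives \eqref{eq:matrix-matching-condition}. Positivity of all kernels and weights lets us bound $\int_{T}^t$ by $\int_0^t$ throughout.

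\emph{Step 1 (matching inequalities).} In case (1), Lemma~\ref{lem:weight-alpha-star-less-one} gives $(r_{\alpha_i}(\cdot,\lambda_i)*w)(t)/w(t)\to1/\lambda_i$. Hence for every $\varepsilon>0$ there is $T_\varepsilon$ with
\[
\int_{T_\varepsilon}^t r_{\alpha_i}(t-s,\lambda_i)w(s)\,ds\le\int_0^t r_{\alpha_i}(t-s,\lambda_i)w(s)\,ds\le(\lambda_i^{-1}+\varepsilon)w(t)
\]
for $t\ge T_\varepsilon$. Since $m$ is finite, we take the maximum of the thresholds over $i$; this gives \eqref{eq:matching-constants-fractional}. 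Case (2) is handled the same way. For $i\notin I_m$ we use Lemma~\ref{lem:matching-higher-order}. For $i\in I_m$ we use Lemma~\ref{lem:matching-minimal-order} directly, which holds for all $t>0$ and hence on $[T_0,t]$ for any $T_0$, provided $\eta<\min_i\lambda_i$.

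\emph{Step 2 (spectral radius).} Define $K_i=\lambda_i^{-1}+\varepsilon$ for $i\notin I_m$ (or for all $i$ in case (1)), and $K_i=(\lambda_i-\eta)^{-1}$ for $i\in I_m$. Then $\mathsf K\to\mathsf\Lambda^{-1}$ as $\varepsilon,\eta\to0$. The spectral radius is continuous in the matrix entries, and $\rho(\mathsf\Lambda^{-1}\mathsf C)<1$ by \eqref{eq:stability}. Therefore we can fix $\varepsilon$ and $\eta$ small enough that $\rho(\mathsf K\mathsf C)<1$. Monotonicity of $\rho$ for nonnegative matrices gives an alternative route: $\mathsf K\le(1+\delta)\mathsf\Lambda^{-1}$ entrywise implies $\rho(\mathsf K\mathsf C)\le(1+\delta)\rho(\mathsf\Lambda^{-1}\mathsf C)$. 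With $\varepsilon,\eta$ fixed, set $T_0=\max(1,T_\varepsilon)$, enlarged further in Step~3 if needed.

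\emph{Step 3 (homogeneous domination).} We must check condition (i) of Definition~\ref{def:well-int-large-time}. In case (1), any $i$ with $u_i^0\not\equiv0$ has $\alpha_i\ge\alpha^*$. If $\alpha_i<1$, Lemma~\ref{lem:ML} gives $s_{\alpha_i}(t,\lambda_i)\lesssim t^{-\alpha_i}\le t^{-\alpha^*}\lesssim w(t)$ for $t\ge1$. If $\alpha_i=1$, the term decays exponentially. In case (2), every $i$ with nonzero data has $\alpha_i=1$, so $s_1=e^{-\lambda_it}=O(t^{-(1+\alpha_m)})$. Lemma~\ref{lem:weight-alpha-star-one-decay} gives $w\ge a t^{-(1+\alpha_m)}$ for $t\ge1$, and this yields $M_i$.

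\emph{Main obstacle.} The main subtlety is that $w$ in case (2) depends on $\eta$. The threshold $T_\varepsilon$ from Lemma~\ref{lem:matching-higher-order} therefore depends on $\eta$, so the order of choices matters. We fix $\eta$ first, then $\varepsilon$, and only then take the asymptotic threshold. This is consistent, because the $K_i$ for $i\in I_m$ do not require any threshold.
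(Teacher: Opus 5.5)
Your proposal is correct and follows essentially the same route as the paper. The matching inequalities come from the asymptotic convolution lemmas together with positivity of the integrands, with $K_i=\lambda_i^{-1}+\varepsilon$ or $K_i=(\lambda_i-\eta)^{-1}$, and continuity of the spectral radius at $\mathsf\Lambda^{-1}\mathsf C$ gives $\rho(\mathsf K\mathsf C)<1$. Your explicit check of the homogeneous domination condition and your explicit order of choices ($\eta$, then $\varepsilon$, then the thresholds) spell out steps the paper only asserts; your restriction $\eta<\min_i\lambda_i$ is stronger than the paper's $\eta<\min_{i\in I_m}\lambda_i$ but harmless.
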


\begin{proof}
  We consider the two cases separately.

  \smallskip
  \noindent
  \textit{Case 1: $\alpha^*<1$.}
  Let
  \[
    w(t)=(1+t)^{-\alpha^*}.
  \]
  By Lemma~\ref{lem:weight-alpha-star-less-one},
  \[
    \int_0^t
    r_{\alpha_i}(t-s,\lambda_i)w(s)\,ds
    \sim
    \frac1{\lambda_i}w(t),
    \qquad
    t\to\infty.
  \]
  Hence, for every $\varepsilon>0$, there exists
  $T_{\varepsilon,i}\geq1$ such that
  \[
    \int_0^t
    r_{\alpha_i}(t-s,\lambda_i)w(s)\,ds
    \leq
    \left(\frac1{\lambda_i}+\varepsilon\right)w(t),
    \qquad
    t\geq T_{\varepsilon,i}.
  \]
  Let
  \[
    T_\varepsilon
    \coloneq
    \max_{1\leq i\leq m}T_{\varepsilon,i}.
  \]
  Since the integrand is nonnegative,
  \[
    \int_{T_\varepsilon}^t
    r_{\alpha_i}(t-s,\lambda_i)w(s)\,ds
    \leq
    \int_0^t
    r_{\alpha_i}(t-s,\lambda_i)w(s)\,ds,
  \]
  which proves \eqref{eq:matching-constants-fractional}.

  Define
  \[
    \mathsf K_\varepsilon
    \coloneq
    \operatorname{diag}
    \left(
      \frac1{\lambda_1}+\varepsilon,\ldots,
      \frac1{\lambda_m}+\varepsilon
    \right).
  \]
  Then
  \[
    \mathsf K_\varepsilon
    \longrightarrow
    \mathsf\Lambda^{-1}
    \qquad\text{as }\varepsilon\to0^+.
  \]
  Since the spectral radius depends continuously on the entries of a
  finite-dimensional matrix, \eqref{eq:stability} implies that
  \[
    \rho(\mathsf K_\varepsilon\mathsf C)<1
  \]
  for all sufficiently small $\varepsilon>0$. Fix such an
  $\varepsilon$, set
  \[
    K_i\coloneq\frac1{\lambda_i}+\varepsilon,
  \]
  and take $T_0\geq T_\varepsilon$ sufficiently large that the
  homogeneous domination condition in
  Definition~\ref{def:well-int-large-time} also holds. Thus $w$ is
  well matched on $[T_0,\infty)$ and
  \eqref{eq:matrix-matching-condition} follows.

  \smallskip
  \noindent
  \textit{Case 2: $\alpha^*=1$.}
  Let $\eta>0$ satisfy
  \[
    0<\eta<\min_{i\in I_m}\lambda_i,
  \]
  and define
  \[
    w(t)
    \coloneq
    \int_0^t
    r_{\alpha_m}(t-s,\eta)(1+s)^{-(1+\alpha_m)}\,ds.
  \]
  By Lemma~\ref{lem:weight-alpha-star-one-decay},
  \[
    w\in\mathcal W(1+\alpha_m).
  \]

  Let $i\in I_m$. Since $\alpha_i=\alpha_m$, Lemma~\ref{lem:matching-minimal-order}
  gives
  \[
    \int_0^t
    r_{\alpha_m}(t-s,\lambda_i)w(s)\,ds
    \leq
    \frac{1}{\lambda_i-\eta}w(t),
    \qquad t>0,
  \]
  whenever $\eta<\lambda_i$. Hence \eqref{eq:matching-constants-minimal}
  holds with
  \[
    K_i\coloneq\frac1{\lambda_i-\eta},
    \qquad i\in I_m.
  \]

  Let now $i\notin I_m$. Then $\alpha_i>\alpha_m$, and
  Lemma~\ref{lem:matching-higher-order} yields
  \[
    \int_0^t
    r_{\alpha_i}(t-s,\lambda_i)w(s)\,ds
    \sim
    \frac1{\lambda_i}w(t),
    \qquad t\to\infty.
  \]
  Hence, for every $\varepsilon>0$, there exists
  $T_{\varepsilon,i}\geq1$ such that
  \[
    \int_{T_{\varepsilon,i}}^t
    r_{\alpha_i}(t-s,\lambda_i)w(s)\,ds
    \leq
    \left(\frac1{\lambda_i}+\varepsilon\right)w(t),
    \qquad
    t\geq T_{\varepsilon,i}.
  \]
  Set
  \[
    K_i\coloneq\frac1{\lambda_i}+\varepsilon,
    \qquad i\notin I_m.
  \]

  Define
  \[
    \mathsf K_{\eta,\varepsilon}
    \coloneq
    \operatorname{diag}(K_1,\ldots,K_m).
  \]
  Since
  \[
    \frac1{\lambda_i-\eta}\longrightarrow\frac1{\lambda_i}
    \qquad\text{as }\eta\to0^+,
    \qquad i\in I_m,
  \]
  and
  \[
    \frac1{\lambda_i}+\varepsilon
    \longrightarrow\frac1{\lambda_i}
    \qquad\text{as }\varepsilon\to0^+,
    \qquad i\notin I_m,
  \]
  we have
  \[
    \mathsf K_{\eta,\varepsilon}
    \longrightarrow\mathsf\Lambda^{-1}
    \qquad\text{as }(\eta,\varepsilon)\to(0,0).
  \]
  Hence, by the continuity of the spectral radius and
  \eqref{eq:stability}, one may choose
  \[
    0<\eta<\min_{i\in I_m}\lambda_i
  \]
  and then $\varepsilon>0$ sufficiently small such that
  \[
    \rho(\mathsf K_{\eta,\varepsilon}\mathsf C)<1.
  \]

  Finally, choose $T_0$ larger than all thresholds
  $T_{\varepsilon,i}$ for $i\notin I_m$, and sufficiently large that
  the homogeneous domination condition holds. Then \(w\) is well
  matched on $[T_0,\infty)$ and
  \eqref{eq:matrix-matching-condition} is satisfied.
\end{proof}

\subsection{A worked example: \texorpdfstring{$m=3$}{m=3}}
\label{subsec:example-m3}

We illustrate the large-time weight construction in the case $m=3$.
Let
\[
  \alpha_1=0.9,\qquad
  \alpha_2=0.6,\qquad
  \alpha_3=0.3,
\]
and suppose that
\[
  \lambda_1=\lambda_2=\lambda_3=2.
\]
Assume that the coefficient-norm matrix $\mathsf C$ has row sums
bounded by $0.5$, that is,
\[
  \sum_{j=1}^3\gamma_{ij}\leq0.5,
  \qquad i=1,2,3.
\]
Then
\[
  \rho(\mathsf\Lambda^{-1}\mathsf C)
  \leq
  \|\mathsf\Lambda^{-1}\mathsf C\|_{\ell^\infty\to\ell^\infty}
  \leq\frac{0.5}{2}
  =\frac14<1.
\]
Thus Assumption~\textup{(S)} holds.

Assume, for concreteness, that
\[
  u_3^0\not\equiv0.
\]
Then
\[
  \alpha^*
  =
  \min\{\alpha_i\colon u_i^0\not\equiv0\}
  =
  \alpha_3
  =
  0.3.
\]
The optimal large-time weight is therefore
\begin{equation}\label{eq:example-weight}
  w(t)\coloneq(1+t)^{-0.3}.
\end{equation}
Clearly,
\[
  w\in\mathcal W(0.3).
\]

For each component, Lemma~\ref{lem:weight-alpha-star-less-one} gives
\[
  \int_0^t
  r_{\alpha_i}(t-s,\lambda_i)w(s)\,ds
  \sim
  \frac{1}{\lambda_i}w(t)
  =
  \frac12 w(t),
  \qquad t\to\infty.
\]
More explicitly,
\begin{align*}
  \int_0^t
  r_{0.9}(t-s,2)w(s)\,ds
  &\sim \frac12w(t),\\
  \int_0^t
  r_{0.6}(t-s,2)w(s)\,ds
  &\sim \frac12w(t),\\
  \int_0^t
  r_{0.3}(t-s,2)w(s)\,ds
  &\sim \frac12w(t).
\end{align*}
Choose
\[
  \varepsilon=\frac14.
\]
Then there exists $T_0\ge1$ such that
\[
  \int_0^t
  r_{\alpha_i}(t-s,2)w(s)\,ds
  \leq
  \frac34w(t),
  \qquad t\ge T_0,\quad i=1,2,3.
\]
Thus one may take
\[
  \mathsf K=\frac34 I_3.
\]
Consequently,
\[
  \rho(\mathsf K\mathsf C)
  \leq
  \|\mathsf K\mathsf C\|_{\ell^\infty\to\ell^\infty}
  \leq
  \frac34\cdot0.5
  =
  \frac38<1.
\]
Hence the matrix closure condition is satisfied.

It remains to verify the homogeneous terms. For every component with
nonzero initial datum,
\[
  s_{\alpha_i}(t,2)
  =
  O(t^{-\alpha_i})
  =
  O(t^{-0.3})
  =
  O(w(t)),
  \qquad t\to\infty.
\]
Indeed, the slowest possible homogeneous decay occurs for the
third component:
\[
  s_{0.3}(t,2)
  \sim
  \frac{1}{2\Gamma(0.7)}t^{-0.3}.
\]
The first and second homogeneous terms decay faster:
\[
  s_{0.9}(t,2)=O(t^{-0.9}),
  \qquad
  s_{0.6}(t,2)=O(t^{-0.6}).
\]

Thus $w$ is well matched on $[T_0,\infty)$, and the large-time closure
principle gives
\[
  \|u_i(t)\|_{L^2(\Omega)}
  \leq C(1+t)^{-0.3},
  \qquad t\ge T_0,\quad i=1,2,3.
\]
After increasing $C$ if necessary, we obtain
\[
  \sum_{i=1}^3\|u_i(t)\|_{L^2(\Omega)}
  \leq Ct^{-0.3},
  \qquad t\ge1.
\]

This example illustrates that the component with the smallest
fractional order among those with nonzero initial data determines the
decay rate of the entire coupled system. In particular, although the
uncoupled first and second components decay at the faster rates
$t^{-0.9}$ and $t^{-0.6}$, respectively, coupling transfers the slower
rate $t^{-0.3}$ to all components.

\section{Proof of the Main Theorem}
\label{sec:proof}

\begin{proof}[Proof of Theorem~\ref{thm:main}]
  Suppose first that $\alpha^*<1$. Let
  \[
    w(t)=(1+t)^{-\alpha^*}.
  \]
  By Lemma~\ref{lem:matching-constants-matrix}, the weight $w$ is well
  matched on $[T_0,\infty)$ and its matching constants satisfy
  \[
    \rho(\mathsf K\mathsf C)<1.
  \]
  Proposition~\ref{prop:large-time-closure} therefore gives
  \[
    \|u_i(t)\|_{L^2(\Omega)}
    \leq C(1+t)^{-\alpha^*},
    \qquad t\geq T_0.
  \]
  Since $m$ is finite, summing the componentwise estimates and
  increasing $C$ if necessary yields
  \[
    \sum_{i=1}^m\|u_i(t)\|_{L^2(\Omega)}
    \leq Ct^{-\alpha^*},
    \qquad t\geq1.
  \]

  Suppose next that $\alpha^*=1$. Let $w$ be given by
  \eqref{eq:weight-integer-case}. Lemma~\ref{lem:matching-constants-matrix}
  and Proposition~\ref{prop:large-time-closure} yield
  \[
    \|u_i(t)\|_{L^2(\Omega)}
    \leq Cw(t),
    \qquad t\geq T_0.
  \]
  Since $w\in\mathcal W(1+\alpha_m)$, this gives
  \[
    \sum_{i=1}^m\|u_i(t)\|_{L^2(\Omega)}
    \leq Ct^{-(1+\alpha_m)},
    \qquad t\geq1.
  \]
  This proves \eqref{eq:main-decay}.
\end{proof}


\section{Discussion and Open Problems}
\label{sec:discussion}

\subsection{Comparison with related work}
\label{subsec:comparison}

We conclude by placing Theorem~\ref{thm:main} in the context of recent
work on coupled nonlocal evolution equations. The principal novelty of
the present paper is the extension of the large-time upper decay
estimate from two-component systems to systems with an arbitrary finite
number of components. Unlike the Laplace-transform approach used in
\cite{LLW26}, our proof is carried out entirely in the time domain and
uses large-time weight estimates for the Volterra kernels.

\begin{table}[ht]
\renewcommand{\arraystretch}{1.35}
\centering
\footnotesize
\caption{Comparison with related works.}
\label{tab:compare}
\smallskip
\begin{tabular}{@{}>{\raggedright\arraybackslash}p{2.3cm}
                    >{\raggedright\arraybackslash}p{2.6cm}
                    >{\raggedright\arraybackslash}p{3.0cm}
                    >{\raggedright\arraybackslash}p{2.5cm}
                    >{\raggedright\arraybackslash}p{2.7cm}@{}}
\toprule
\textbf{Feature}
  & \textbf{\cite{LLW26}}
  & \textbf{Present paper}
  & \textbf{\cite{KR18}}
  & \textbf{\cite{NTDO26}} \\
\midrule
Number of components
  & $m=2$
  & $m\geq2$
  & $m=1$
  & $m=2$ \\
\addlinespace[2pt]
Time operators
  & Caputo derivatives
  & Caputo derivatives
  & distributed-order derivative
  & Caputo and distributed-order derivatives \\
\addlinespace[2pt]
Decay type
  & polynomial
  & polynomial
  & logarithmic
  & polynomial and logarithmic \\
\addlinespace[2pt]
Delayed coupling
  & no
  & no
  & no
  & yes \\
\addlinespace[2pt]
Main method
  & Laplace transform and residues
  & large-time time-domain weights
  & energy and spectral methods
  & time-domain analysis \\
\addlinespace[2pt]
Sharpness/ lower bounds
  & optimality shown by examples
  & open for $m\geq3$
  & scalar sharpness results
  & problem-dependent \\
\bottomrule
\end{tabular}
\end{table}

\begin{enumerate}[label=\textbf{\arabic*.}, leftmargin=*, itemsep=6pt]

  \item \textbf{Li--Liu--Wada \cite{LLW26}.}
  Li, Liu, and Wada established the two-component result by means of
  Laplace transforms and a residue analysis on a slit complex plane.
  Their method exploits the relatively explicit form of the
  two-component characteristic relation. For systems with three or
  more components, the corresponding relation contains several
  distinct fractional powers of the Laplace variable and becomes much
  less tractable.
 
Theorem~\ref{thm:main} yields the same upper decay exponents as
\cite[Theorem~1.1]{LLW26} in the two-component setting and extends the
time-domain argument to arbitrary $m\ge2$. In the special case of
vanishing self-coupling, the matrix stability condition contains the
small-coupling regime considered in \cite{LLW26}. The proof avoids
explicit analysis of the roots of the characteristic equation.
Instead, it combines the mild-solution representation with large-time
convolution estimates for resolvent kernels.

  \item \textbf{Kubica--Ryszewska \cite{KR18}.}
  Kubica and Ryszewska considered scalar equations with a
  distributed-order time derivative. Such equations exhibit
  logarithmic, rather than polynomial, large-time decay. Their setting
  is therefore complementary to the present one: \cite{KR18} concerns
  a scalar distributed-order equation, whereas the present paper
  concerns coupled systems with finitely many fixed Caputo orders.

  A natural next step is to understand coupled systems containing both
  fixed-order and distributed-order components. In such problems, one
  expects competition between polynomial and logarithmic decay
  mechanisms.

  \item \textbf{Mixed Caputo/distributed-order systems \cite{NTDO26}.}
  The companion preprint \cite{NTDO26} studies a two-component system
  involving one Caputo-type equation and one distributed-order
  equation, together with delayed coupling terms. The coexistence of
  polynomial and logarithmic rates requires a different class of
  comparison weights, since no single power weight can describe both
  asymptotic regimes.

  The present work and \cite{NTDO26} share the general principle of
  deriving decay from time-domain estimates for Volterra kernels.
  However, the mixed-order problem requires additional estimates that
  account for the distributed-order resolvent and, in the delayed
  setting, for time-shifted solution terms.

  \item \textbf{Delayed nonlocal evolution systems \cite{NNN25}.}
  Delayed nonlocal evolution systems introduce a further difficulty:
  the Volterra term contains expressions of the form
  \[
    u_j(t-\rho),
    \qquad \rho>0.
  \]
  Extending Theorem~\ref{thm:main} to delayed couplings would require
  a large-time matching estimate for shifted weights, for example an
  estimate relating $w(t-\rho)$ to $w(t)$. Since the weights used in
  the present proof are regularly varying at infinity, this appears
  plausible for a fixed delay, but a complete treatment would require
  a separate well-posedness and closure argument.
\end{enumerate}

\subsection{Sharpness}
\label{subsec:sharpness}

Theorem~\ref{thm:main} establishes upper decay bounds. 
In the two-component setting, \cite{LLW26} shows, through explicit
examples, that the corresponding exponents cannot in general be
improved. For systems with
$m\geq3$ components, however, a general matching lower bound remains
open.

There are two distinct issues in proving such a lower bound. 
First, one must identify the leading asymptotic contribution of the
homogeneous solution associated with a component whose initial datum
has order $\alpha^*$. When $\alpha^*<1$, the leading contribution is expected, in the
autonomous case, to be described by the resolvent of the coupled
elliptic operator. Second, one must show that
the Duhamel terms cannot cancel the leading contribution at the same
asymptotic order.

For this reason, we do not claim unconditional sharpness for
$m\geq3$. A possible route to a lower bound is to consider a perturbative regime
in which the coupling coefficients are sufficiently small relative to
the leading homogeneous contribution.
Such a result would be
perturbative and would not resolve the general conjecture.

\begin{remark}[Perturbative sharpness when $\alpha^*<1$]
  Suppose that the coupling coefficients have the form
  \[
    c_{ij}=\varepsilon\bar c_{ij},
  \]
  where the profiles $\bar c_{ij}$ are fixed and bounded.
  Under a suitable uniform version of the weighted upper estimate with
  respect to $\varepsilon$, the Duhamel term is then an
$O(\varepsilon t^{-\alpha^*})$ perturbation of the homogeneous
solution. Thus one expects the upper rate $t^{-\alpha^*}$ to persist
for sufficiently weak coupling. We do not formulate or prove this
perturbative statement here, since the principal open problem is a
non-perturbative matching lower bound for general couplings when
$m\ge3$.
\end{remark}

\begin{openproblem}
  Under Assumptions~\textup{(A)}, \textup{(C)}, and \textup{(S)},
  establish a matching lower bound for $m\ge3$:
  \[
    \sum_{i=1}^m\|u_i(t)\|_{L^2(\Omega)}
    \geq c\,t^{-\alpha^*},
    \qquad t\geq T,
  \]
  when $\alpha^*<1$, and
  \[
    \sum_{i=1}^m\|u_i(t)\|_{L^2(\Omega)}
    \geq c\,t^{-(1+\alpha_m)},
    \qquad t\geq T,
  \]
  when $\alpha^*=1$, under natural nondegeneracy conditions on the
  initial data and coupling coefficients.
\end{openproblem}

\subsection{Time-dependent coupling coefficients}

The coefficients $c_{ij}=c_{ij}(x,t)$ are allowed to depend on time,
but the proof uses only the uniform bounds
\[
  \|c_{ij}\|_{L^\infty(\Omega\times\R_+)}.
\]
Consequently, the argument does not exploit any possible decay or
additional structure of the coupling as $t\to\infty$.

If
\[
  \|c_{ij}(\cdot,t)\|_{L^\infty(\Omega)}
  \longrightarrow0
  \qquad\text{as }t\to\infty,
\]
then the Duhamel contribution may be smaller than the bound obtained
from the uniform norm. It would be interesting to determine whether
quantitative decay assumptions on $c_{ij}(\cdot,t)$ yield improved
asymptotic rates, weaker stability hypotheses, or asymptotic
decoupling of the components.

\subsection{Fractional wave systems}

The present paper is restricted to orders \(\alpha_i\in(0,1]\).
The range \(1<\alpha_i<2\) corresponds to time-fractional wave-type
equations and presents additional difficulties.

For \(1<\alpha<2\), the Mittag--Leffler function
\(E_\alpha(-\lambda t^\alpha)\) is no longer completely monotone.
Although exponentially damped oscillatory terms may occur in its
asymptotic representation, the leading algebraic term has a fixed
sign:
\[
  E_\alpha(-\lambda t^\alpha)
  \sim
  \frac{1}{\lambda\Gamma(1-\alpha)}\,t^{-\alpha},
  \qquad t\to\infty.
\]
Accordingly, the sign behavior and positivity arguments used for
\(\alpha\in(0,1)\) are no longer available. In particular, the
positive-kernel comparison estimates underlying the present
large-time weight method do not directly extend to this range.

\begin{openproblem}
Develop a decay theory for weakly coupled time-fractional wave systems
with orders \(1<\alpha_i<2\), including suitable stability conditions
and sharp large-time estimates.
\end{openproblem}

\section*{Acknowledgements}

This work was completed during visits to the Vietnam Institute for Advanced Study in Mathematics (VIASM). The authors are grateful to VIASM for its hospitality and support during these visits.


\end{document}